\newcommand {\apart} [1] {\kern 0.25em {} #1 {} \kern 0.25em}
\newcommand {\ball} [2] {B_{#1} (#2)}
\newcommand {\Ball} [2] {B_{#1} \left(#2\right)}
\newcommand {\bd} [1] {\mathop {\mbox {\rm bd}} \kern 0.1em (#1)}
\newcommand {\cl} [1] {\mathop {\mbox {\rm cl}} \kern 0.1em (#1)}
\renewcommand {\d} [1] {D_{#1}}
\newcommand {\differential} [1] {\, \mathop {\equiv}_{#1}^{\partial} \,}
\newcommand {\dist} [1] {\mathop {\mbox {\rm dist}} \kern 0.1em (#1)}
\newcommand {\Function} [1] {\smash {F^{(#1)}}}
\newcommand {\Inf} [1] {\inf_{\mbox {\substrut \scriptsize #1}}}
\newcommand {\Int} [1] {\mathop {\mbox {\rm int}} \kern 0.1em (#1)}
\renewcommand {\j} [1] {{\mathcal J}_{\kern -0.075em \it #1}}
\newcommand {\J} {{\mathbf J}}
\newcommand {\K} {{\mathbf K}}
\newcommand {\Lim} [1] {\lim_{\mbox {\substrut \scriptsize #1}}}
\newcommand {\Max} [1] {\max_{\mbox {\substrut \scriptsize #1}}}
\newcommand {\Min} [1] {\min_{\mbox {\substrut \scriptsize #1}}}
\newcommand {\mlb} {\smash {m^{(\ref {lem:ml})}}}
\newcommand {\note} [1] {\qquad \mbox {#1}}
\renewcommand {\r} {{\mathbf r}}
\newcommand {\rational} [1] {\, \mathop {\equiv}_{#1}^{\div} \,}
\newcommand {\Reals} {{\mathbb R}}
\newcommand {\reals} {{\mathbb R}}
\newcommand {\substrut} {{\normalsize \vrule depth0pt height1.5exwidth0pt}}
\newcommand {\Set} {{\mathcal S}}
\newcommand {\Sup} [1] {\sup_{\mbox {\substrut \scriptsize #1}}}
\newcommand {\TanX} {{\mathcal T}}
\newcommand {\TanY} {\Function 1}
\newcommand {\yone} {y_1}
\newcommand {\ytemp} {\hat y}
\newcommand {\colocated} {\smash {N^{(\ref {lem:colocated})}_{x_0} (\epsilon)}}
\newcommand {\Lipschitz} {\smash {N^{(\ref {lem:Lipschitz})}_{x_0}}}
\newcommand {\LipschitzForOther} [1] {\smash {N^{(\ref {cor:Lipschitz}.#1)}_{x_0}}}
\newcommand {\ml} {\smash {N^{(\ref {lem:ml})}_{x_0}}}
\newcommand {\MVTd} [1] {\smash {N^{(\ref {lem:mean})}_{y_0} (#1)}}
\newcommand {\MVTs} [1] {\smash {N^{(\ref {lem:mean})}_{x_0} (#1)}}
\newcommand {\MuF} {\mu_{F}^{}}
\newcommand {\Muf} [1] {\mu_{F^{(#1)}}^{}}
\newcommand {\Mu} [1] {\mu_{#1}^{}}
\newcommand {\prob} [1] {(P_{#1}^{})}
\newcommand {\probmax} [1] {(P_{#1})_{\, \max}}
\newcommand {\probmin} [1] {(P_{#1})_{\, \min}}
\newtheorem {dummy} {Dummy} [section]
\newtheorem {hypothesis} [dummy] {Hypothesis}
\begin{document}

\title{Asymptotic Equality of Minimal Perturbations Under Linearization}
\title{Asymptotic Equality of Minimal Perturbations to Roots\\ Under Linearization of the Equations}
\title{Asymptotic Equality of Minimal Perturbations to Roots}
\title{Asymptotic Equality Under Linearization of Minimal Perturbations to Roots for Parameterized Equations}
\title{Minimal Perturbations to Roots of Parameterized Equations are Asymptotically Equal Under Linearization}
\title{Minimal Perturbations to Roots of Parameterized Equations are Asymptotically Equal Under Linearization}
\title{Minimal Perturbations to Roots of Parameterized Equations are Asymptotically Equal Under Linearization}
\title{Minimal Perturbations to Roots of Parameterized Equations}

\author{Joseph F. Grcar\thanks{6059 Castlebrook Drive, Castro Valley, CA 94552 USA (jfgrcar@comcast.net).}}

\maketitle

\begin{abstract}
The size of minimal perturbations to roots of parameterized equations can be estimated reliably from linearizations of the equations.
\end{abstract}

\begin{keywords}
parametric optimization,
perturbation analysis
\end {keywords}

\begin {AMS}
90C31,
26B05,
41A29,
46N10
\end {AMS}


\setlength {\arraycolsep} {0.25em}
\setlength {\tabcolsep} {0.25em}
\renewcommand\bottomfraction{1.0}
\renewcommand\floatpagefraction{1.0}
\renewcommand\textfraction{0.0}
\renewcommand\topfraction{1.0}


\section {Introduction}
This paper offers a systematic way to answer the question: how much change must occur in a solution of equations to compensate for perturbations to the equations? Short of finding all the nearby roots of the new equations, the minimal change can be determined in an asymptotic sense by linearizing the equations and considering the dual problems. This conclusion is exhaustive because all nearby roots are considered, and strong because the asymptotics imply differential approximations. 

The asymptotic relationship is proved here. Companion papers make applications to differentiability of best approximations and to numerical analysis. 

\section {Approach}

\subsection {Introduction}
\label {sec:equivalence} 

Let the equations be $F(y, x) =0$ with the specific root $(y_0, x_0)$. The variable $x$ is regarded as the parameter so that $y$ depends on $x$ constrained by $F(y,x)=0$. The equations for $y$ may be underdetermined so $y$ may not be a function of $x$. Nevertheless, the size of minimal perturbations to $y_0$ is a function,
\begin {equation}
\label {eqn:mu-F}
\MuF (x) = \Min {$y : F(y, x) = 0$} \| y - y_0 \| \, .
\end {equation}
The idea is to study the value of this optimization problem by linearizing the equations. There are two requirements for the altered problems: 
\smallskip
\begin {enumerate}
\raggedright
\item The values of the simplified problems should mimic how $\MuF(x)$ varies with $x$.
\item Since $\MuF(x)$ is of interest when $x \approx x_0$, good mimicry is needed near $x_0$. 
\end {enumerate}
\smallskip
The novelty of the present approach is to formalize these requirements by equivalence relations, $\equiv$, among functions of $x$; two equivalences are chosen in section \ref {sec:relation}. Problem (\ref {eqn:mu-F}) is then altered by linearizing $F$; three linearizations, $\Function i$, are constructed in section \ref {sec:linearizations}. The bulk of the paper establishes equivalences $\MuF \equiv \Muf i$. For simplicity, the values of the altered problems are written $\Muf i = \Mu i$.

\subsection {Equivalence Relations}
\label {sec:relation}

The following equivalence relation is appropriate when differentiability at $x_0$ is the object of study.

\begin {definition} [Differential equivalence]
\label {def:differential}
The functions $f$ and $g$ defined on a neighborhood of $x_0 \in \Reals^n$ with values in $\reals^p$ are differentially equivalent at $x_0$ provided $f - g$ has a Fr\'echet derivative of $0$ at $x_0$, equivalently,
\begin {equation} 
\label {eqn:FrechetEquivalent}
{\textstyle f \differential {x_0} g}
\quad \Longleftrightarrow \quad
\Lim {$x \rightarrow x_0$} {\| f(x) - g(x) \| \over \| x - x_0 \|} \; =
\; 0 \, .
\end {equation}
\end {definition}

\begin {lemma}
\label {lem:FrechetEquivalent}
Differential equivalence is an equivalence relation. (This lemma is clear and not proved.)
\end {lemma}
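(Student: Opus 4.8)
The plan is to verify directly the three defining properties of an equivalence relation---reflexivity, symmetry, and transitivity---working throughout from the limit characterization in~(\ref{eqn:FrechetEquivalent}) rather than from the Fr\'echet-derivative formulation. Throughout I would take the functions to be defined on a common neighborhood of $x_0$, as Definition~\ref{def:differential} supplies, so that every difference quotient below makes sense for $x \neq x_0$.

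Reflexivity and symmetry require only elementary properties of the norm. For reflexivity, the numerator $\| f(x) - f(x) \|$ vanishes identically, so the quotient is $0$ for every $x \neq x_0$ and its limit is trivially $0$; hence $f \differential {x_0} f$. For symmetry, the evenness of the norm gives $\| g(x) - f(x) \| = \| f(x) - g(x) \|$, so the two difference quotients are literally equal; if one has limit $0$ then so does the other, and therefore $f \differential {x_0} g$ implies $g \differential {x_0} f$.

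Transitivity is the only step carrying any content. Assuming $f \differential {x_0} g$ and $g \differential {x_0} h$, I would insert $g(x)$ and apply the triangle inequality to obtain, for $x \neq x_0$,
\[
\frac {\| f(x) - h(x) \|} {\| x - x_0 \|}
\; \le \;
\frac {\| f(x) - g(x) \|} {\| x - x_0 \|}
+ \frac {\| g(x) - h(x) \|} {\| x - x_0 \|} \, .
\]
By hypothesis each summand on the right tends to $0$ as $x \to x_0$, hence so does their sum; since the left-hand quotient is nonnegative, it is squeezed to $0$, which is exactly $f \differential {x_0} h$.

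I expect no genuine obstacle: the argument is the standard verification that ``agreement to first order at a point'' is transitive, and the only analytic ingredients are the triangle inequality and the additivity of limits. The single point worth recording explicitly is the shared domain assumption, needed so that all three quotients are simultaneously defined near $x_0$. This routineness is presumably why the authors declare the lemma clear and omit the proof.
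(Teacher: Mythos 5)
Your proof is correct; the paper deliberately omits any argument here, declaring the lemma clear, and your verification (reflexivity from the vanishing numerator, symmetry from the symmetry of the norm, transitivity from the triangle inequality and a squeeze) is exactly the routine check the authors have in mind. The only point of care, which you address, is that the two neighborhoods of $x_0$ on which $f$ and $g$ are defined may differ, but their intersection is again a neighborhood on which all the difference quotients make sense.
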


If $g$ is an affine function, then equation (\ref {eqn:FrechetEquivalent}) becomes the definition for the Fr\'echet derivative of $f$ at $x_0$. In this way the differential properties of $f$ at $x_0$ are determined by the differential equivalence class. 

A simpler but stronger equivalence relation is that real-valued functions should be relatively closer as $x$ approaches $x_0$.

\begin {definition} [Asymptotic equality]
\label {def:rational}
The real-valued functions $f$ and $g$ defined on a neighborhood of $x_0 \in \Reals^n$ are asymptotically equal at $x_0$ provided for every $\epsilon > 0$ there is a neighborhood $N (\epsilon)$ of $x_0$ such that $x \in N(\epsilon)$ implies 
\begin {equation} 
\label {eqn:rational}
{\textstyle f \rational {x_0} g}
\quad \Longleftrightarrow \quad
(1 - \epsilon) g(x) \le f(x) \le (1 + \epsilon) g(x) \, .
\end {equation}
\vspace*{-3ex}
\end {definition}
\begin {lemma}
\label {lem:equivalent}
Asymptotic equality is an equivalence relation. (This lemma is clear and not proved.)
\end {lemma}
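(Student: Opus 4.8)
The plan is to verify directly the three defining properties of an equivalence relation for the asymptotic equality of (\ref{eqn:rational}): reflexivity, symmetry, and transitivity. Throughout I would work under the standing understanding that the functions in question are non-negative near $x_0$, as are the values $\MuF$ and $\Mu{i}$ that motivate the definition, since this is precisely what makes the multiplicative comparison in (\ref{eqn:rational}) well behaved. It is also worth recording at the outset a degenerate feature of the definition that governs every boundary case: if $g(x) = 0$ for some $x$ in the neighborhood supplied by $f \rational {x_0} g$, then (\ref{eqn:rational}) forces $0 \le f(x) \le 0$, hence $f(x) = 0$. Thus asymptotic equality makes the zero sets of $f$ and $g$ agree near $x_0$, and wherever both vanish the two-sided inequality holds trivially.

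Reflexivity is immediate: for $f \ge 0$ the chain $(1-\epsilon) f(x) \le f(x) \le (1+\epsilon) f(x)$ is equivalent to $\epsilon f(x) \ge 0$, which holds on any neighborhood, so any choice of $N(\epsilon)$ works and $f \rational {x_0} f$. For symmetry, suppose $f \rational {x_0} g$ and fix a target $\epsilon \in (0,1)$. I would apply the hypothesis with the strictly smaller tolerance $\epsilon' = \epsilon/(1+\epsilon)$, obtaining a neighborhood $N(\epsilon')$ on which $(1-\epsilon') g \le f \le (1+\epsilon') g$. On the portion of this neighborhood where $g(x) > 0$ I reciprocate: the left inequality gives $g \le f/(1-\epsilon')$ and the right gives $g \ge f/(1+\epsilon')$. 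The choice of $\epsilon'$ is engineered so that $1/(1-\epsilon') = 1+\epsilon$ and $1/(1+\epsilon') \ge 1-\epsilon$, whence $(1-\epsilon) f \le g \le (1+\epsilon) f$. The portion where $g(x) = 0$ is covered by the zero-set remark above (there $f(x)=0$ as well, and the band is trivial), so setting $N(\epsilon) := N(\epsilon')$ establishes $g \rational {x_0} f$.

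For transitivity, suppose $f \rational {x_0} g$ and $g \rational {x_0} h$, and again fix $\epsilon > 0$. I would choose an auxiliary $\delta \in (0,1)$ with $(1+\delta)^2 \le 1+\epsilon$ (equivalently $\delta \le \sqrt{1+\epsilon}-1$), apply each hypothesis with tolerance $\delta$ to get neighborhoods $N_1$ and $N_2$, and work on $N_1 \cap N_2$. Chaining the two bands, using $h \ge 0$ and $0 < 1-\delta$, gives $f \le (1+\delta) g \le (1+\delta)^2 h$ and $f \ge (1-\delta) g \ge (1-\delta)^2 h$. Finally $(1+\delta)^2 \le 1+\epsilon$ yields $2\delta \le \epsilon$, which forces $(1-\delta)^2 \ge 1-2\delta \ge 1-\epsilon$; thus both composed factors land inside the target band and $f \rational {x_0} h$.

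I expect no deep obstacle here, which is why the statement is offered as clear; the only subtlety is that the tolerance in (\ref{eqn:rational}) is multiplicative rather than additive. Consequently symmetry requires reciprocating the factors $1\pm\epsilon$ and transitivity requires multiplying them, so in each case the auxiliary tolerance must be taken slightly smaller than the target $\epsilon$ rather than equal to it, and one must keep track of the vanishing locus of the comparison function. Handling these bookkeeping points is the entire content of the argument.
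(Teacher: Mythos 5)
The paper offers no proof of this lemma (it is declared clear), and your direct verification of reflexivity, symmetry, and transitivity --- with the auxiliary tolerances $\epsilon/(1+\epsilon)$ and $\delta$ satisfying $(1+\delta)^2 \le 1+\epsilon$ chosen to absorb the reciprocation and composition of the multiplicative bands --- is correct and is surely the argument intended. Your standing assumption that the functions are non-negative is not literally part of Definition \ref{def:rational} but is genuinely needed for symmetry, and it is consistent with the paper's use of the relation, which is applied only to the values $\MuF$ and $\Mu{i}$, i.e.\ to minima of norms, so this is a fair (indeed necessary) reading rather than a gap.
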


Asymptotic equality is stronger than differential equivalence. For example, all functions with vanishing derivatives at $0$ are differentially equivalent there, but two monomials $c_1 x^{n_1}$ and $c_2 x^{n_2}$ are asymptotically equal at $0$ if and only if they are equal. 

For the function $\MuF (x)$ in equation (\ref {eqn:mu-F}), asymptotic equality implies differential equivalence. The proof of this implication in lemma \ref {lem:Lipschitz} depends on a modified implicit function theorem in lemma \ref {lem:ift}, and on the Lipschitz continuity of $\MuF (x)$ at $x_0$. 

\begin {hypothesis}
\label {hyp:}
Hypothesis \ref {hyp:1}--\ref {hyp:4} are used throughout this paper, while \ref {hyp:5} or \ref {hyp:6} are used occasionally. 

\begin {enumerate}
\item \label {hyp:1} Norms are given for $\Reals^m$, $\Reals^n$ and $\Reals^p$.
\item ${\mathcal D} \subseteq \Reals^m \times \Reals^n$ is a neighborhood of $(y_0, x_0)$.
\item \label {hyp:3} $F : {\mathcal D} \rightarrow \Reals^p$ is continuously Fr\'echet differentiable.
\item \label {hyp:4} $F (y_0, x_0) = 0$.
\item \label {hyp:5} $\d1 F (y_0, x_0) : \Reals^m \rightarrow \Reals^p$ is onto.
\item \label {hyp:6} $\d2 F (y_0, x_0) : \Reals^n \rightarrow \Reals^p$ is one-to-one.
\end {enumerate}
\end {hypothesis}

\newcommand {\hypotheses} [1] {\ref {hyp:} (\ref{hyp:1}--\ref {hyp:#1})}

\begin {lemma} 
[Modified implicit function theorem]
\label {lem:ift} 
Under hypotheses \hypotheses 5, there is a neighborhood $N$ of $x_0$ and a Fr\'echet differentiable function $\phi : N \rightarrow \Reals^m$ with $\phi (x_0) = y_0$ and $F(\phi(x), x) = 0$ for all $x \in N$.
\end {lemma}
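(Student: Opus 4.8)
The plan is to deduce this underdetermined version from the classical (square) implicit function theorem by confining the dependent variable to a $p$-dimensional affine slice through $y_0$ on which the partial derivative becomes invertible. Abbreviate $A = \d1 F (y_0, x_0) : \Reals^m \to \Reals^p$, which is onto by hypothesis \ref{hyp:5}. The obstruction to invoking the usual theorem directly is that $A$ need not be square; surjectivity is precisely the condition that lets us repair this by selecting a good complement to its kernel.

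First I would record the elementary linear-algebra fact that an onto map $A : \Reals^m \to \Reals^p$ admits a linear right inverse $B : \Reals^p \to \Reals^m$ with $A B = I_p$. Equivalently, since $A$ has rank $p$ and nullity $m - p$, one may write $\Reals^m = \ker A \oplus W$ for a $p$-dimensional subspace $W$ on which $A$ restricts to an isomorphism $A|_W : W \to \Reals^p$; take $B$ to be the inverse of that restriction, so that the range of $B$ is $W$ and $A B = I_p$.

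Next I would introduce an auxiliary map $G$, defined on a neighborhood of $(0, x_0)$ in $\Reals^p \times \Reals^n$ by $G(u, x) = F(y_0 + B u, \, x)$. By hypothesis \ref{hyp:3} and the chain rule, $G$ is continuously Fr\'echet differentiable; by hypothesis \ref{hyp:4}, $G(0, x_0) = F(y_0, x_0) = 0$; and $\d1 G (0, x_0) = A \circ B = I_p$, which is invertible. The classical implicit function theorem then supplies a neighborhood $N$ of $x_0$ and a Fr\'echet differentiable $\psi : N \to \Reals^p$ with $\psi(x_0) = 0$ and $G(\psi(x), x) = 0$ for all $x \in N$. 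Setting $\phi(x) = y_0 + B\, \psi(x)$ yields a Fr\'echet differentiable function---the composition of the affine map $u \mapsto y_0 + B u$ with $\psi$---satisfying $\phi(x_0) = y_0$ and $F(\phi(x), x) = G(\psi(x), x) = 0$ on $N$, as required.

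The substance of the argument is entirely in this reduction: once the slice $W$ is fixed, the analytic work is delegated to the standard theorem, so the only steps needing care are the construction of $B$ and the chain-rule verification that the classical hypotheses transfer to $G$. I do not expect a deep obstacle, only a bookkeeping point worth flagging: the $\phi$ produced here is but one branch of the generally $(m-p)$-dimensional solution set of $F(\cdot, x) = 0$ near $y_0$, and it depends on the arbitrary choice of $B$. Since the lemma asserts only the existence of some such differentiable branch, this non-uniqueness is harmless.
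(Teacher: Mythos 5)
Your proposal is correct and follows essentially the same route as the paper: both arguments restrict $F$ to a $p$-dimensional affine slice through $y_0$ on which $\d1 F(y_0,x_0)$ becomes invertible and then invoke the classical implicit function theorem. The only cosmetic difference is that you parameterize the slice by a linear right inverse $B$ of $\d1 F(y_0,x_0)$, whereas the paper writes $\Reals^m$ as a direct sum and works with the restriction to the $p$-dimensional summand directly.
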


\newcommand {\spann} [1] {(\Reals^m)^{(#1)}}

\begin {proof}
The proof applies the usual theorem, which requires that $\d1 F (y_0, x_0)$ be one-to-one. In the present case the mapping is onto, so there are $p$ vectors in $\Reals^m$ that map to linearly independent vectors in $\Reals^p$, and there are $m-p$ additional vectors that complete a basis for $\Reals^m$. Let $y = y^{(p)} + y^{(m-p)}$ be the decomposition of $y \in \Reals^m$ into the subspaces spanned by the respective sets of basis vectors. $\d1 F (y_0, x_0)$ restricted to $\spann p$ is one-to-one. This fact and hypotheses \hypotheses 4 suffice to invoke the implicit function theorem for the function defined by $\smash {F(y + y_0^{(m-p)}, x)}$ on the domain ${\mathcal D} \cap [\spann p \times \Reals^n]$. There is a neighborhood $N$ of $x_0$ in $\Reals^n$ on which there is a continuously differentiable function $\phi: N \rightarrow (\Reals^m)^{(p)}$ such that $\phi (x_0) = \smash {y_0^{(p)}}$ and $\smash {F (\phi(x) + y_0^{(m-p)}, x) = 0}$. The implicit function in the lemma is given by $\smash {\phi(x) + y_0^{(m-p)}}$. 
\end {proof}

\begin {lemma} 
[Existence of $\MuF (x)$ and properties]
\label {lem:Lipschitz} 
Under hypotheses \hypotheses 5, there is a constant $L > 0$ and a neighborhood $\Lipschitz$ of $x_0$ where the function $\MuF (x)$ of equation (\ref {eqn:mu-F}) exists, and $\MuF (x) \le L \| x - x_0 \|$. Further, for any function $f$, 
\begin {displaymath}
\textstyle
f \rational {x_0} \MuF \quad \Rightarrow \quad f \differential {x_0} \MuF \, .
\end {displaymath}
\end {lemma}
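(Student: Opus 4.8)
The plan is to split the argument into two independent tasks: first to show that $\MuF(x)$ is well defined and obeys a bound of the form $\MuF(x) \le L \| x - x_0 \|$ on some neighborhood, and then to derive the stated implication from this bound together with the definition of asymptotic equality. The function $\phi$ produced by the modified implicit function theorem (lemma \ref{lem:ift}) is the bridge between the two tasks, since it simultaneously certifies that the constraint set in (\ref{eqn:mu-F}) is nonempty and supplies the competitor value $\| \phi(x) - y_0 \|$ that will dominate $\MuF(x)$.

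For the first task I would proceed as follows. Fix a closed ball $\Ball{r}{y_0}$ whose $y$-section lies inside ${\mathcal D}$. Because $\phi$ is continuous with $\phi(x_0) = y_0$, shrinking the neighborhood $N$ of lemma \ref{lem:ift} ensures $\phi(x) \in \Ball{r}{y_0}$, so the set $\{\, y \in \Ball{r}{y_0} : F(y,x) = 0 \,\}$ is nonempty; being closed in a compact ball, it is compact, so the continuous function $\| y - y_0 \|$ attains a minimum on it. That minimum is global because any $y$ outside the ball satisfies $\| y - y_0 \| > r \ge \| \phi(x) - y_0 \|$ and hence cannot improve on $\phi(x)$. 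This yields existence of $\MuF(x)$ together with $\MuF(x) \le \| \phi(x) - y_0 \| = \| \phi(x) - \phi(x_0) \|$. Since $\phi$ is continuously differentiable it is Lipschitz on a possibly smaller neighborhood $\Lipschitz$ with some constant $L > 0$, which gives $\MuF(x) \le L \| x - x_0 \|$.

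The second task is where the bound is decisive. Assume $f \rational{x_0} \MuF$ and fix $\epsilon > 0$. On the neighborhood $N(\epsilon)$ of definition \ref{def:rational} the two-sided estimate (\ref{eqn:rational}) rearranges, using $\MuF(x) \ge 0$, to $|f(x) - \MuF(x)| \le \epsilon \, \MuF(x)$. Intersecting $N(\epsilon)$ with $\Lipschitz$ and inserting $\MuF(x) \le L \| x - x_0 \|$ then gives, for $x \ne x_0$, the quotient estimate $|f(x) - \MuF(x)| / \| x - x_0 \| \le \epsilon L$. Consequently $\Limsup{$x \rightarrow x_0$} |f(x) - \MuF(x)| / \| x - x_0 \| \le \epsilon L$, and letting $\epsilon \downarrow 0$ with $L$ fixed forces this quantity to vanish, which is precisely $f \differential{x_0} \MuF$.

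I expect the only genuine obstacle to lie in the attainment of the minimum in the first task: the zero set of $F(\cdot, x)$ need not be bounded or even closed within ${\mathcal D}$, so one cannot minimize over it directly. The remedy above is to localize to a compact ball and verify that localization discards no true minimizer, using $\phi(x)$ as a competitor whose value is already at most $r$. Everything past the Lipschitz bound is then a short arbitrary-$\epsilon$ argument.
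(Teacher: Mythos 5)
Your proof follows essentially the same route as the paper's: invoke lemma \ref{lem:ift} to obtain $\phi$, use $\phi(x)$ both to certify feasibility and to get the Lipschitz bound $\MuF(x) \le \| \phi(x) - \phi(x_0) \| \le L \| x - x_0 \|$, and then convert $|f(x) - \MuF(x)| \le \epsilon \, \MuF(x) \le \epsilon L \| x - x_0 \|$ into the vanishing Fr\'echet quotient. Your localization to a compact ball with $\phi(x)$ as competitor is in fact slightly more careful than the paper, which simply asserts the feasible set is closed and the minimum attained by finite-dimensionality (a claim that needs your patch when $\mathcal D$ is a proper open subset).
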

\vspace {-3ex}
\begin {proof}
Hypotheses \hypotheses 5 suffice to invoke the version of the implicit function theorem in lemma \ref {lem:ift}: $x_0$ has a neighborhood $N$ on which there is a continuously differentiable function $\phi: N \rightarrow \Reals^m$ such that $(\phi (x), x)$ is always a root of $F$. Thus the minimization problems for $\MuF (x)$ have feasible points for all $x \in N$. The feasible sets are closed because $F$ is continuous, so the minimal distance to $y_0$ is attained because the spaces have finite dimension. This means $\MuF$ is well defined on $N$. Since $\phi$ is continuously differentiable, it is Lipschitz continuous on compact sets. Choose a compact neighborhood $\Lipschitz \subseteq N$ with Lipschitz constant $L$. Thus $\MuF (x) \le \| \phi(x) - y_0 \| = \| \phi(x) - \phi(x_0) \| \le L \| x - x_0 \|$ for every $x$ in the neighborhood. 

Given $\epsilon > 0$, let $N(\epsilon)$ be the neighborhood in definition \ref {def:rational} for $f \rational {x_0} \MuF$. If $x \in N(\epsilon) \cap \smash{\Lipschitz}$, then $(1 - \epsilon) \MuF (x) \le f(x) \le (1 + \epsilon) \MuF (x)$ by the equivalence, so $| f(x) - \MuF (x) | \le \epsilon \, \MuF (x) \le \epsilon L \| x - x_0 \|$ and thus the limit in equation (\ref {eqn:FrechetEquivalent}) vanishes.
\end {proof}

\subsection {Linearized Problems with Equivalent Minimal Perturbations}
\label {sec:linearizations}

It is instructive to compare the present situation with the implicit function theorem. Under hypotheses \hypotheses 4 and if $\d1 F (y_0, x_0): \Reals^m \rightarrow \Reals^p$ is invertible, then some roots of $F(y,x) = 0$ are given by a smooth parameterization $(\phi(x), x)$. These roots can be located to first order in $x-x_0$ by considering the linearization,
\begin {equation}
\label {eqn:iftcase}
F(\phi(x),x) = 0 \quad \Rightarrow \quad \left[ \d1 F (y_0, x_0) \, D \phi (x_0) + \d2 F(y_0, x_0) \right] (x - x_0) = 0 \, .
\end {equation}
The parameterized roots are approximated by,
\begin {displaymath}
\phi(x) - y_0 \approx - \left[ \d1 F (y_0, x_0) \right]^{-1} \left[ \d2 F(y_0, x_0) \right] (x - x_0) \, .
\end {displaymath}
In contrast, if $\d1 F (y_0, x_0): \Reals^m \rightarrow \Reals^p$ is not invertible, the smallest change $y - y_0$ as a function of $x$ can still be approximated from the linearizations $\Function i$ of $F$ in \textbf {Table \ref {tab:simplifications}}. 

\begin {table} [h!]
\caption {Linearizations of the function $F$ at $(y_0, x_0)$. The notation is $\Delta y = y - y_0$ and $\Delta x = x - x_0$.}
\label {tab:simplifications}
\begin {center}
\small
\renewcommand {\arraystretch} {1.33}
\begin {tabular} {c c c || c r c l c |}
\hline
&0&&& $F(y, x)$&&&\\
\hline
&1&&& $\displaystyle \Function1 (y, x)$& $=$& $\displaystyle \d1 F (y_0, x) \, \Delta y + F (y_0, x)$&\\
&2&&& $\displaystyle \Function2 (y, x)$& $=$& $\displaystyle \d1 F (y_0, x_0) \, \Delta y + F (y_0, x)$&\\
&3&&& $\displaystyle \Function3 (y, x)$& $=$& $\displaystyle \d1 F (y_0, x_0) \, \Delta y + \d2 F(y_0, x_0) \, \Delta x$&\\
\hline
\end {tabular}
\end {center}
\end {table}

The different linearizations have different uses. For example, $\Function 1$ and (\ref {eqn:mu-F}) do not require $x_0$. The several approximations are treated in a progression of equivalences for $F$ and $\Function 1$, then $\Function 1$ and $\Function 2$, and so on. The last $\Function 3$ is the full linearization (\ref {eqn:iftcase}) of the implicit function theorem. The proof of asymptotic equality for $\Function i$ and $\Function {i+1}$ is carried out with the dual mathematical programs. All the optimization problems are listed in \textbf {Table \ref {tab:equivalences}}, and the network of equivalences to be established is shown in \textbf {Figure \ref {fig:proofs}}.

If $F$ satisfies hypotheses \hypotheses {5}, then all the functions of Table \ref {tab:simplifications} satisfy the same hypotheses, so they also satisfy the conclusions of lemma \ref {lem:Lipschitz}. 

\begin {corollary} 
[Existence of $\Mu i (x)$ and properties]
\label {cor:Lipschitz}
Under hypotheses \hypotheses 5 for $F$, for each function $\Function i$ of Table \ref {tab:simplifications} there is a constant $L_i > 0$ and a neighborhood $\LipschitzForOther i$ of $x_0$ where the following distance function is well defined
\begin {equation}
\label {eqn:mu-for-other}
\Mu i (x) = \Min {$y : \Function i (y, x) = 0$} \| y - y_0 \| \, ,
\end {equation}
and $\Mu i (x) \le L_i \| x - x_0 \|$. Further, for any function $f$, 
\begin {displaymath}
\textstyle
f \rational {x_0} \Mu i \quad \Rightarrow \quad f \differential {x_0} \Mu i \, .
\end {displaymath}
\end {corollary}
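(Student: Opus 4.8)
The plan is to read $\Mu i$ as the quantity $\MuF$ of equation (\ref{eqn:mu-F}) with the linearization $\Function i$ substituted for $F$, so that the corollary will follow once each $\Function i$ is shown to inherit hypotheses \hypotheses 5; lemma \ref{lem:Lipschitz} then applies with $\Function i$ in the role of $F$. The hypotheses that transfer without effort are the easy ones. Hypothesis \ref{hyp:1} (the norms) is unchanged; the domain requirement is met by restricting each $\Function i$ to a product neighborhood of $(y_0, x_0)$ on which $F(y_0, x)$ and $\d1 F(y_0, x)$ are defined (for $\Function 3$, built only from data at $(y_0, x_0)$, any neighborhood serves). Hypothesis \ref{hyp:4} holds because $\Function i(y_0, x_0) = 0$ in each row of Table \ref{tab:simplifications}, using $F(y_0, x_0) = 0$ and $\Delta y = \Delta x = 0$. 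Hypothesis \ref{hyp:5} holds because differentiating in $y$ gives $\d1 \Function i(y_0, x_0) = \d1 F(y_0, x_0)$ for every $i$, which is onto by assumption.

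The only genuine content is hypothesis \ref{hyp:3}, continuous differentiability. For $\Function 2$ and $\Function 3$ it is immediate: each is the sum of a term whose $y$-coefficient is the constant linear map $\d1 F(y_0, x_0)$ and a term that is either the $C^1$ map $x \mapsto F(y_0, x)$ or an affine map of $x$, so the joint Fr\'echet derivative exists and is continuous. Thus lemma \ref{lem:Lipschitz} applies verbatim to $\Function 2$ and $\Function 3$, producing the neighborhood $\LipschitzForOther i$, the constant $L_i$, well-definedness of $\Mu i$ in (\ref{eqn:mu-for-other}), the bound $\Mu i(x) \le L_i \| x - x_0 \|$, and the implication $f \rational {x_0} \Mu i \Rightarrow f \differential {x_0} \Mu i$. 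I expect the main obstacle to be $\Function 1$, whose leading coefficient $\d1 F(y_0, x)$ varies with $x$: since $F$ is only continuously differentiable, the map $x \mapsto \d1 F(y_0, x)$ is merely continuous, so $\Function 1$ need not be jointly $C^1$ and lemma \ref{lem:ift} cannot be quoted as a black box.

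The hard part will therefore be to supply a feasible parameterization for $\Function 1$ directly instead of through the implicit function theorem. Surjectivity is an open condition on linear maps, so $\d1 F(y_0, x)$ remains onto for $x$ near $x_0$ and there admits a continuous right inverse $\d1 F(y_0, x)^{+}$ (for instance the pseudoinverse $A^{+} = A^\top (A A^\top)^{-1}$, continuous where $A$ has full row rank). Setting $\phi(x) = y_0 - \d1 F(y_0, x)^{+} F(y_0, x)$ gives $\Function 1(\phi(x), x) = 0$, so the feasible set is nonempty; it is closed by continuity of $\Function 1$, and since the objective $\| y - y_0 \|$ is coercive the minimum in (\ref{eqn:mu-for-other}) is attained, so $\Mu 1$ is well defined. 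Because $F(y_0, x_0) = 0$ and $x \mapsto F(y_0, x)$ is $C^1$, one has $\| F(y_0, x) \| \le L' \| x - x_0 \|$ near $x_0$ while $\d1 F(y_0, x)^{+}$ stays bounded, whence $\Mu 1(x) \le \| \phi(x) - y_0 \| \le L_1 \| x - x_0 \|$. With this linear bound the final implication is obtained by repeating the computation of lemma \ref{lem:Lipschitz}: on $\LipschitzForOther 1 \cap N(\epsilon)$ asymptotic equality gives $| f(x) - \Mu 1(x) | \le \epsilon \, \Mu 1(x) \le \epsilon L_1 \| x - x_0 \|$, so the quotient in (\ref{eqn:FrechetEquivalent}) tends to $0$.
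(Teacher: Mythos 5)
Your proposal is correct, and for $\Function 2$ and $\Function 3$ it coincides with the paper's argument, which consists entirely of the assertion that the linearizations satisfy the same hypotheses as $F$ so that lemma \ref{lem:Lipschitz} applies. Where you part company with the paper is on $\Function 1$, and your instinct is sound: the paper's blanket claim is not literally true there, because $\Function 1 (y, x) = \d1 F (y_0, x) \, \Delta y + F (y_0, x)$ is differentiable in $x$ only if $x \mapsto \d1 F (y_0, x)$ is, which hypothesis \ref{hyp:}(\ref{hyp:3}) does not supply; so the joint continuous differentiability can fail for $\Function 1$ and lemma \ref{lem:ift} cannot be quoted verbatim. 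Your repair --- a continuous right inverse $\d1 F (y_0, x)^{+}$ on the open set where $\d1 F (y_0, x)$ remains onto, the explicit feasible selection $\phi (x) = y_0 - \d1 F (y_0, x)^{+} F (y_0, x)$, boundedness of the right inverse on a compact neighborhood, and the local bound $\| F (y_0, x) \| \le L' \| x - x_0 \|$ --- delivers exactly the calmness estimate $\Mu 1 (x) \le L_1 \| x - x_0 \|$ that the rest of lemma \ref{lem:Lipschitz}'s argument consumes, and the attainment and final-implication steps carry over unchanged (the feasible set is an affine subspace, hence closed). What your route buys is a proof that actually covers $\Function 1$ under the stated hypotheses; what the paper's one-line route buys is brevity, at the cost of an unverified step. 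You could also have obtained the uniformly bounded right inverse from lemma \ref{lem:ml}, which the paper proves later for precisely the family $\d1 F (y_0, x)$; that would tie your repair to the paper's own toolkit.
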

\vspace {-3ex}
\begin {proof} The linearizations satisfy the same hypotheses as $F$, so lemma \ref {lem:Lipschitz} applies. 
\end {proof}

\newcommand {\centerbox} [1] {\begin {minipage} {6em} \begin {center}
\vrule depth.66ex height1.75ex width0pt \scriptsize #1 \end {center}
\end {minipage}}
\newcommand {\leftbox} [1] {\mbox {\vrule depth.66ex height1.75ex width0pt \scriptsize \kern 0.5em #1}}
\newcommand {\leftparbox} [1] {\kern 0.4em\parbox {11em} {\vrule depth.66ex height1.75ex width0pt \scriptsize #1}}
\newcommand {\rightbox} [1] {\mbox {\vrule depth.66ex height1.75ex width0pt \scriptsize #1 \kern 0.0em}}

\begin {table} 
\caption {Optimization problems parameterized by $x$ and their duals. The values of problems $\prob{}$, $\prob1$, $\prob2$ are asymptotically equal at $x_0$ under hypotheses \hypotheses {5}. The value of $\prob3$ is differentially equivalent to the others under these hypotheses, and is asymptotically equal under hypotheses \hypotheses {6}. See Table \ref {tab:matrix} for the problems in matrix notation. In these formulas, $\Delta x = x - x_0$ and $\Delta y = y - y_0$.}
\label {tab:equivalences}
\small
\renewcommand {\substrut} {{\normalsize \vrule depth0pt height2ex
width0pt}}
\renewcommand {\Max} [1] {\max_{\makebox[5em][l]{\substrut \scriptsize #1}}}
\renewcommand {\Min} [1] {\min_{\makebox[5em][l]{\substrut \scriptsize #1}}}
\newcommand {\minobject} {\| \Delta y \|}
\newcommand {\tablestrut} {\vrule depth4.25ex height3.0ex width0pt}
\arraycolsep = 0.75em
\vspace {-2ex}
\begin {displaymath}
\begin {array} {| c | c | c | l l |}
\multicolumn {2} {c} {\relax}& \multicolumn {1} {c} {\hspace{-1em}\mbox {constraint}\hspace{-1em}}\\
\mbox {name}& \mbox {value}& \mbox {function}& \multicolumn{2}{c |}{\mbox {minimization form} \hfill \mbox {dual, maximization form}}\\
\hline \hline
\tablestrut \prob{}&
\MuF (x)& F(y,x)&
\displaystyle 
\Min {$y : F (y, x) = 0$} \minobject&
\\ \hline
\tablestrut \prob1&
\Mu1 (x)& \Function 1 (y, x)&
\multicolumn {2} {c |}
{
\displaystyle 
\Min {$y : \d1 F (y_0, x) \Delta y + F (y_0, x) = 0$} \minobject
\hspace*{7em}
\Max {$f : \| \d1 F (y_0, x)^* f \| \le 1$} f (F (y_0, x))
}
\\ \hline
\tablestrut \prob2&
\Mu2 (x)& \Function 2 (y, x)&
\multicolumn {2} {c |}
{
\displaystyle 
\Min {$y : \d1 F (y_0, x_0) \Delta y + F (y_0, x) = 0$} \minobject
\hfill
\Max {$f : \| \d1 F (y_0, x_0)^* f \| \le 1$} f (F (y_0, x))
}
\\ \hline
\tablestrut \prob3&
\Mu3 (x)& \Function 3 (y, x)&
\multicolumn {2} {c |}
{
\displaystyle
\Min {$y : D F (y_0, x_0) (\Delta y, \Delta x) = 0$} \minobject
\hfill
\Max {$f : \| \d1 F (y_0, x_0)^* f \| \le 1$} f (\d2 F (y_0, x_0) \Delta x)
}
\\ \hline
\end {array}
\end {displaymath}
\end {table}

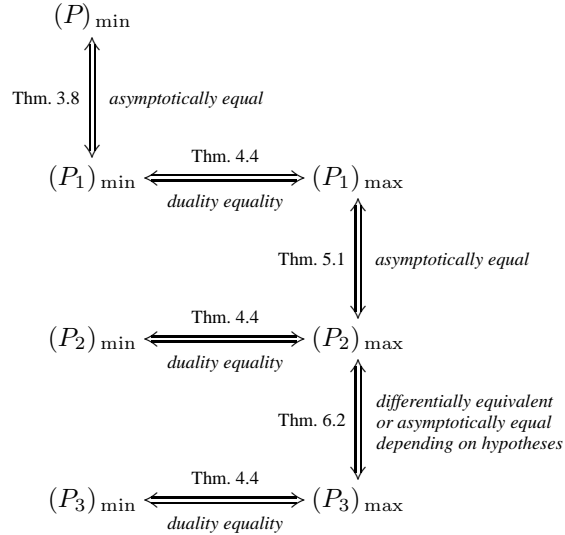
\begin {figure} 
\begin {displaymath}
\xymatrix@C=6em@R=10ex {
\probmin{}\ar@{<=>}[d]_{\rightbox{Thm.\ \ref {thm:first}}}^{\leftbox{\it asymptotically equal}}\\
\probmin1\ar@{<=>}[r]^{\centerbox{Thm.\ \ref {thm:duality}}}_{\centerbox{\it duality equality}}&
\probmax1\ar@{<=>}[d]_{\rightbox{Thm.\ \ref {thm:second}}}^{\leftbox{\it asymptotically equal}}\\
\probmin2\ar@{<=>}[r]^{\centerbox{Thm.\ \ref {thm:duality}}}_{\centerbox{\it duality equality}}&
\probmax2\ar@{<=>}[d]_{\rightbox{Thm.\ \ref {thm:third}}}^{\leftparbox{\it differentially equivalent\\ or asymptotically equal\\ depending on hypotheses}}\\
\probmin3\ar@{<=>}[r]^{\centerbox{Thm.\ \ref {thm:duality}}}_{\centerbox{\it duality equality}}& \probmax3
}
\end {displaymath}
\caption {Where and how the equivalences of Table \ref {tab:equivalences} are proved.}
\label {fig:proofs}
\end {figure}

\section {First Equivalence, $\probmin{} \equiv \probmin1$}
\label {sec:first}

The preparations to establish the first equivalence are the most elaborate in this paper. Several aspects of the difference between $F$ and the tangent function for $y$, $\TanY$, are uniform in $x - x_0$: mean values, Fr\'echet differentials, and level sets. The first equivalence thus requires giving a uniform parameterization to many basic concepts in real analysis, which are indicated in \textbf {Figure \ref {fig:first}}. The mean value theorem and Fr\'echet quotient are discussed in section \ref {sec:mean}, the matrix lower bound is in section \ref {sec:mlb}, level sets are in section \ref {sec:level}, and finally the proof of the first equivalence is in section \ref {sub:first}.

\begin {figure}
\newcommand {\mybox} [2] {\begin {minipage} {#1} \begin {center} \vrule depth0pt height1.75ex width0pt \scriptsize #2 \end {center} \end {minipage}}
\begin {displaymath}
\xymatrix {
\mybox {8em} {\textit {uniformly} parameterized\break mean value theorem,\break Lemma \ref {lem:mean}}\ar[r]\ar[rd]
& \mybox {8em} {\textit {uniformly} approximating Fr\'echet differential,\break Corollary \ref {cor:Frechet}}\ar[d]& 
\mybox {7em} {matrix lower bound,\break Definition \ref {def:lower}}\ar[d]
\\
\mybox {6em} {Lipschitz\break continuity of $\MuF$, Lemma \ref {lem:Lipschitz}}\ar[dr]\ar[d]& 
\mybox {7em} {\textit {uniformly} colocated level sets,\break Lemma \ref {lem:colocated}}\ar[d]& 
\mybox {8em} {\textit {uniformly} bounded below partial derivatives,\break Lemma \ref {lem:ml}}\ar[l]
\\
\mybox {6em} {Lipschitz\break continuity of $\Mu 1$, Corollary \ref {cor:Lipschitz}}\ar[r]
& \mybox {6em} {1$^{\rm st}$ equivalence,\break Theorem \ref {thm:first}}
}
\end {displaymath}
\caption {Dependencies for the proof of the first equivalence.}
\label {fig:first}
\end {figure}

\subsection {Uniformly Parameterized Mean Value Theorem}
\label {sec:mean}

It is well known that if $f$ is continuously differentiable, then for every $y_3$ and every $\epsilon > 0$ there is a neighborhood $N_{y_3} (\epsilon)$ of $y_3$ where
\begin {equation} \label {eqn:keyLemma}
y_1, y_2 \in N_{y_3} (\epsilon) \kern 0.5em \Rightarrow \kern0.5em \| f
(y_1) - f (y_2) - D f (y_3) (y_1 - y_2) \| \le \epsilon \, \| y_1 - y_2
\| \, .
\end {equation}
This serves as a mean value theorem in multiple dimensions. Luenberger \cite [p.\ 212] {Luenberger1969} remarks that it has been discussed many times. Bartle \cite [p.\ 377] {Bartle1976} calls (\ref {eqn:keyLemma}) the ``key lemma'' for theorems like the implicit function theorem. Ortega and Rheinboldt \cite [p.\ 72] {Ortega1970} show that (\ref {eqn:keyLemma}) is equivalent to the continuity of the derivative. Here, this surrogate mean value theorem is generalized to parameterized functions. 

\begin {lemma} 
[Uniformly parameterized mean value theorem]
\label {lem:mean}
Under hypotheses \hypotheses 5, for every $\epsilon > 0$ there is a neighborhood $\MVTd \epsilon \, \times \, \MVTs \epsilon \subseteq {\mathcal D}$ such that for all $y_1, y_2, y_3 \in \MVTd \epsilon$ and $x \in \MVTs \epsilon$,
\begin {equation} 
\label {eqn:mean}
\| F (y_1, x) - F (y_2, x) - \d1 F (y_3, x) (y_1 - y_2) \| \le \epsilon \, \| y_1 - y_2 \| \, .
\end {equation}
\end {lemma}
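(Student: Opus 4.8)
The plan is to reduce the parameterized estimate (\ref {eqn:mean}) to the uniform continuity of the partial derivative $\d1 F$, which is available because $F$ is continuously Fr\'echet differentiable under \hypotheses 3 (so $\d1 F$, a block of the continuous total derivative $DF$, is itself continuous on $\mathcal D$). The engine is the integral form of the mean value theorem along segments in the $y$ variable: for a fixed parameter $x$, and for $y_1, y_2$ whose connecting segment lies in the domain, the $\Reals^p$-valued function $g(t) = F(y_2 + t(y_1 - y_2), x)$ is continuously differentiable in $t$ with $g'(t) = \d1 F(y_2 + t(y_1 - y_2), x)(y_1 - y_2)$, so that
\[ F(y_1, x) - F(y_2, x) = \int_0^1 \d1 F(y_2 + t(y_1 - y_2), x)(y_1 - y_2) \, dt . \]
Writing the constant term $\d1 F(y_3, x)(y_1 - y_2)$ as the same integral with constant integrand and estimating under the integral sign will bound the left side of (\ref {eqn:mean}) by $\sup_t \| \d1 F(y_2 + t(y_1 - y_2), x) - \d1 F(y_3, x) \| \cdot \| y_1 - y_2 \|$.

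The remaining work is to choose $\MVTd \epsilon$ and $\MVTs \epsilon$ so this supremum is at most $\epsilon$ for every admissible $y_1, y_2, y_3, x$. First I would fix a compact, convex product neighborhood $K = \bar N_y \times \bar N_x \subseteq \mathcal D$ of $(y_0, x_0)$; compactness upgrades the pointwise continuity of $\d1 F$ to uniform continuity on $K$, and convexity of the $y$-factor guarantees that every intermediate point $y_2 + t(y_1 - y_2)$ stays in the domain, legitimizing the integral representation. Given $\epsilon > 0$, uniform continuity yields a $\delta > 0$ with $\| \d1 F(y', x) - \d1 F(y'', x) \| \le \epsilon$ whenever $(y', x), (y'', x) \in K$ and $\| y' - y'' \| < \delta$. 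I would then take $\MVTd \epsilon$ to be the open ball about $y_0$ of radius $\delta/2$ (small enough to lie in $\bar N_y$) and $\MVTs \epsilon$ to be the interior of $\bar N_x$. For $y_1, y_2, y_3 \in \MVTd \epsilon$ and $x \in \MVTs \epsilon$, both $y_2 + t(y_1 - y_2)$ and $y_3$ lie in that ball, hence within distance $\delta$ of each other, so every integrand difference is at most $\epsilon$ and (\ref {eqn:mean}) follows.

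I expect no genuine obstacle: this is the classical ``key lemma'' (\ref {eqn:keyLemma}) carried along with the parameter $x$. Two points do require care. The first is bookkeeping the two uniformities simultaneously, since a single $\delta$ must control $\d1 F$ across all $x$ in the parameter neighborhood and for all three of $y_1, y_2, y_3$ ranging freely; this is precisely what joint uniform continuity on the compact $K$ delivers, in contrast to (\ref {eqn:keyLemma}), where $y_3$ is fixed and the neighborhood may depend on it. The second is justifying the vector-valued fundamental theorem of calculus, which is routine once one notes that $t \mapsto \d1 F(y_2 + t(y_1 - y_2), x)$ is continuous, so the integrand is a continuous $\Reals^p$-valued function and the estimate $\| \int_0^1 h(t) \, dt \| \le \int_0^1 \| h(t) \| \, dt$ applies. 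Neither point uses hypothesis \ref {hyp:5}; only the continuous differentiability of $F$ enters.
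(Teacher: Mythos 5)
Your proposal is correct and follows essentially the same route as the paper: both reduce the estimate to uniform continuity of $\d1 F$ on a compact product neighborhood of $(y_0,x_0)$ and then invoke the standard bound $\| F(y_1,x)-F(y_2,x)-\d1 F(y_3,x)(y_1-y_2)\| \le \sup_t \| \d1 F(ty_1+(1-t)y_2,x)-\d1 F(y_3,x)\|\,\|y_1-y_2\|$, which the paper cites from Bartle and Ortega--Rheinboldt and you rederive via the integral form of the mean value theorem. The only cosmetic difference is that the paper packages the uniform continuity through the auxiliary function $g(y_1,y_2,x)=\d1 F(y_1,x)-\d1 F(y_2,x)$ on $Y\times Y\times X$, whereas you apply uniform continuity of $\d1 F$ directly with the parameter $x$ held fixed in both arguments; both choices of neighborhoods work.
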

\vspace*{-3ex}
\begin {proof} The topology of the product space $\Reals^m \times \Reals^n$ can be generated from the products of the open sets, so it is possible to choose a compact, convex neighborhood $Y$ around $y_0$, and a compact neighborhood $X$ around $x_0$, so that $Y \times X \subseteq {\mathcal D}$. All norms for a finite dimensional space generate the same topology, so without loss of generality let the norm for $\Reals^m \times \Reals^m \times \reals^n$ be $\max \{ \| y_1 \|, \| y_2 \|, \| x \| \}$. Since $\d1 F (y, x)$ is continuous, hence $g(y_1, y_2, x) = \d1 F (y_1, x) - \d1 F (y_2, x)$ is uniformly continuous on the compact set $K = Y \times Y \times X$. The uniform continuity means, for every $\epsilon > 0$ there is a $\delta (\epsilon) > 0$ so that if $(y_1, y_2, x), (y_1^\prime, y_2^\prime, x^\prime) \in K$ with $\max \, \{ \| y_1 - y_1^\prime \|, \| y_2 - y_2^\prime \|, \| x - x^\prime \| \} \le \delta (\epsilon)$, then $\| g(y_1, y_2, x) - g(y_1^\prime, y_2^\prime, x^\prime) \| \le \epsilon$. 

Choose the neighborhoods in the statement of the lemma to be $\MVTd \epsilon = \ball {y_0} {\delta (\epsilon)} \cap Y$ and $\MVTs \epsilon = \ball {x_0} {\delta (\epsilon)} \cap X$. Note, these sets are convex. If $y_1$, $y_2$, $y_3$ and $x$ are from the respective sets, then 
\begin {eqnarray*}
\| \d1 F (t y_1 + (1-t) y_2, x) - \d1 F (y_3, x) \|& =& \| g (t y_1 + (1-t) y_2, y_3, x) \|\\
& =& \| g (t y_1 + (1-t) y_2, y_3, x) - g(y_0, y_0, x_0) \|\\
& \le& \epsilon \, .
\end {eqnarray*}
It is well known from \cite [p.\ 376, lemma 41.3] {Bartle1976} or from \cite [p.\ 70, lemma 3.2.5] {Ortega1970} that if $D \subseteq \Reals^m$ is a convex, open set, and if $f : D \rightarrow \Reals^p$ is continuously differentiable, then for any $y_1, y_2, y_3 \in D$,
\begin {displaymath}
\| f(y_1) - f(y_2) - Df (y_3) (y_1 - y_2) \| \le \Sup {$0 \le t \le 1$} \| Df (t y_1 + (1-t) y_2) - Df (y_3) \| \, \| y_1 - y_2 \| \, .
\end {displaymath}
Applying this inequality to the parameterized function $F (y, x)$ for the previously chosen $y_1$, $y_2$, $y_3$, $x$ gives
\begin {eqnarray*}
&& \hspace*{-2em} \| F (y_1, x) - F (y_2, x) - \d1 F (y_3, x) (y_1 - y_2) \|\\
\noalign {\smallskip}
& \le& \Sup {\hspace{-1em}$0 \le t \le 1$\hspace{-1em}} \quad \| \d1 F (t y_1 + (1-t) y_2, x) - \d1 F (y_3, x) \| \, \| y_1 - y_2 \|\\
\noalign {\smallskip}
& \le& \epsilon \, \| y_1 - y_2 \| \, .
\end {eqnarray*}
\end {proof}

Lemma \ref {lem:mean} gives conditions under which the Fr\'echet differential for $y$ is uniformly approximating with respect to the parameter $x$.

\begin {corollary} [Uniformly approximating differential]
\label {cor:Frechet}
The neighborhoods of lemma \ref {lem:mean} also satisfy, for all $y \in \MVTd \epsilon$ and $x \in \MVTs \epsilon$,
\begin {equation} 
\label {eqn:Frechet}
\| F (y, x) - \TanY (y, x) \| \le \epsilon \, \| y - y_0 \| \, ,
\end {equation}
where $\TanY (y, x)$ is the parameterized tangent function of Table \ref {tab:simplifications}.
\end {corollary}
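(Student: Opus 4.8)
The plan is to obtain (\ref{eqn:Frechet}) as a direct specialization of Lemma \ref{lem:mean}. From Table \ref{tab:simplifications} the parameterized tangent function is $\TanY (y, x) = \d1 F (y_0, x) (y - y_0) + F (y_0, x)$, so the quantity to be bounded rearranges to
\begin{displaymath}
\textstyle
F (y, x) - \TanY (y, x) = F (y, x) - F (y_0, x) - \d1 F (y_0, x) (y - y_0) \, .
\end{displaymath}
This is precisely the expression appearing on the left-hand side of (\ref{eqn:mean}) after the substitution $y_1 = y$ and $y_2 = y_3 = y_0$.

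First I would check that this substitution is admissible, that is, that $y_0 \in \MVTd \epsilon$. Since $\MVTd \epsilon$ is a neighborhood of $y_0$ it contains its own center $y_0$, so $y_0$ is a legitimate choice for both $y_2$ and $y_3$; moreover the hypothesis $y \in \MVTd \epsilon$ supplies a valid $y_1$, and $x \in \MVTs \epsilon$ is given. Thus all the hypotheses of Lemma \ref{lem:mean} are satisfied for this particular triple of points.

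Applying (\ref{eqn:mean}) with these values yields $\| F (y, x) - F (y_0, x) - \d1 F (y_0, x) (y - y_0) \| \le \epsilon \, \| y - y_0 \|$, which by the identity above is exactly (\ref{eqn:Frechet}). There is no genuine obstacle here: the analytic content of the corollary is entirely carried by Lemma \ref{lem:mean}, and the only points deserving care are recognizing that collapsing the two base points $y_2, y_3$ of the mean value estimate onto $y_0$ converts it into a Fr\'echet-type differential estimate, and confirming that $y_0$ lies in the neighborhood $\MVTd \epsilon$ so that the specialization is valid.
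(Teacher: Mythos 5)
Your proof is correct and is essentially identical to the paper's: both specialize Lemma \ref{lem:mean} by taking $y_1 = y$ and $y_2 = y_3 = y_0$, so that the mean value estimate collapses to the differential estimate (\ref{eqn:Frechet}). Your added check that $y_0$ lies in $\MVTd\epsilon$ is a harmless extra verification the paper leaves implicit.
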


\begin {proof} 
Choose $y_1 = y$, $y_2 = y_0$ and $y_3 = y_0$ so that the formula in equation (\ref {eqn:mean}) becomes
\begin {eqnarray*}
F (y_1, x) - F (y_2, x) - \d1 F (y_3, x) (y_1 - y_2) & =& F (y, x) - F (y_0, x) - \d1 F (y_0, x) (y - y_0)\\
\noalign {\smallskip}
& =& F (y, x) - \TanY (y, x) \, .
\end {eqnarray*}
\end {proof}

\subsection {Matrix Lower Bound}
\label {sec:mlb}

The matrix lower bound, $\| A \|_\ell$, is analogous to the matrix norm but with reversed inqualities. The following are from \cite [p.\ 205, def.\ 2.1 and lem.\ 2.2; p.\ 212, cor.\ 4.3] {Grcar2010}.

\begin {definition} [Matrix lower bound]
\label {def:lower}
Let $A$ be a nonzero matrix. The matrix lower bound, $\| A \|_\ell$, is the largest of the numbers, $m$, such that for every $y$ in the column space of $A$, there is some $x$ with $A x = y$ and $m \, \| x \| \le \|y\|$. 
\end {definition}

\begin {lemma} 
\label {lem:exists}
The matrix lower bound exists and is positive for every nonzero matrix.
\end {lemma}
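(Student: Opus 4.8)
The plan is to reduce the definition to an extremal characterization and then invoke compactness. For each $y$ in the column space of $A$, among all solutions of $A x = y$ the one of smallest norm gives the tightest constraint, so I would introduce the value function
\[
v(y) = \min \{\, \| x \| : A x = y \,\},
\]
the minimum being attained because $\{ x : A x = y \}$ is a nonempty closed affine set on which $\| \cdot \|$ is coercive (intersect with a large closed ball). With this notation, the requirement that some $x$ satisfy $A x = y$ and $m \| x \| \le \| y \|$ is equivalent, for $m \ge 0$, to $m \, v(y) \le \| y \|$; the choice $y = 0$ imposes no constraint. Hence the set of admissible $m$ is $\{\, m : m \, v(y) \le \| y \| \text{ for all nonzero } y \text{ in the column space} \,\}$, whose supremum is $M = \inf_{y \ne 0} \| y \| / v(y)$. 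I would first note that this supremum is attained as a genuine largest $m$: since $M \le \| y \| / v(y)$ for every nonzero $y$, taking $x$ to be a norm-minimizing solution realizes $M \| x \| = M \, v(y) \le \| y \|$, so $M$ itself is admissible while no larger value is.

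It remains to show $M > 0$, and the crux is the observation that $v$ is itself a norm on the (finite-dimensional) column space. I would check the three axioms directly: absolute homogeneity because the solution set of $A x = \lambda y$ is exactly $\lambda$ times that of $A x = y$; the triangle inequality by adding norm-minimizing solutions for $y_1$ and $y_2$ to produce a solution for $y_1 + y_2$; and positive definiteness because $v(y) = 0$ would force $0$ to solve $A x = y$, giving $y = 0$. Being a norm on a finite-dimensional space, $v$ is continuous relative to the ambient norm $\| \cdot \|$.

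Finally I would use the scale invariance of the ratio $\| y \| / v(y)$ under positive rescaling of $y$ to pass to the unit sphere $S = \{\, y : \| y \| = 1 \,\}$ of the column space, so that
\[
M = \inf_{y \ne 0} \frac{\| y \|}{v(y)} = \frac{1}{\sup_{y \in S} v(y)} \, .
\]
Because $A \ne 0$, the column space is nontrivial and $S$ is nonempty and compact; the continuous function $v$, which is positive everywhere on $S$, therefore attains a finite positive maximum there. Consequently $M = 1 / \max_{S} v$ is positive, which together with the previous paragraph establishes both existence and positivity.

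The main obstacle will be that for a general norm the norm-minimizing solution need not be unique, so an approach that first builds a map $y \mapsto x^*(y)$ and argues its continuity is fragile. The clean workaround is to reason entirely through the value $v(y)$ rather than through any selected minimizer: recognizing $v$ as a norm bypasses every uniqueness and selection issue and supplies the continuity and compactness needed at one stroke.
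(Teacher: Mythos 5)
Your argument is correct, but there is nothing in the paper to compare it against: Lemma \ref{lem:exists} is stated without proof and is imported verbatim from the reference \cite{Grcar2010}, so the paper deliberately treats it as a black box. Your proof is a sound, self-contained substitute. The reduction of the admissibility condition for $m$ to the single scalar inequality $m\,v(y) \le \|y\|$ is exactly right, and identifying $v$ as a norm on the column space is the decisive step: it delivers continuity without ever selecting a minimizer, and then compactness of the unit sphere of the (nontrivial, finite-dimensional) column space gives $\sup_S v < \infty$ and hence $M = 1/\max_S v > 0$. You also correctly handle the subtlety that ``the largest of the numbers $m$'' in Definition \ref{def:lower} demands attainment of the supremum, which follows because the admissible set is the closed ray $(-\infty, M]$. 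The only cosmetic remark is that the two norms appearing in $\|x\|$ and $\|y\|$ live on different spaces (domain and range), but your argument never conflates them, so nothing breaks.
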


\begin {lemma} 
\label {lem:continuous}
The matrix lower bound is continuous on the open set of full rank matrices.
\end {lemma}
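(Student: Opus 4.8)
The plan is to reduce to the two ways a matrix can have full rank and to handle each with a variational formula whose value is manifestly continuous. Since full rank means $\mathrm{rank}\,A = \min(m,p)$, the matrix $A : \Reals^m \to \Reals^p$ is either injective (when $m \le p$) or surjective (when $m \ge p$), and both when it is square. In every case full rank is an open condition, so near a given $A$ the rank is locally constant, which is what keeps the relevant geometry from jumping.

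First I would treat the injective case. When $A$ is one-to-one, each $y$ in the column space has a unique preimage $x$, so the requirement of Definition \ref{def:lower} collapses to $m\|x\| \le \|Ax\|$ for every $x$, whence $\|A\|_\ell = \min_{\|x\|=1}\|Ax\|$. This quantity is $1$-Lipschitz, hence continuous, in the induced operator norm: if $x^*$ attains the minimum for $B$, then $\min_{\|x\|=1}\|Ax\| \le \|Ax^*\| \le \|Bx^*\| + \|A-B\|\|x^*\| = \min_{\|x\|=1}\|Bx\| + \|A-B\|$, and symmetrically, so the two minima differ by at most $\|A-B\|$. The surjective case I expect to be the crux, because the solution set of $Ax=y$ is now a whole coset. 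Here the column space is all of $\Reals^p$, and the relevant quantity is the minimal-norm solution $h_A(y) = \min\{\|x\| : Ax = y\}$; the definition then gives $\|A\|_\ell = 1/G(A)$ with $G(A) = \max_{\|y\|=1} h_A(y)$, which is finite and positive by Lemma \ref{lem:exists}. It suffices to show $h_A(y)$ is jointly continuous in $(A,y)$ over surjective $A$ and $y \ne 0$, since then $G$ is continuous as a maximum of a continuous function over the fixed compact unit sphere, and $\|A\|_\ell = 1/G(A)$ is continuous by positivity of $G$.

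For the joint continuity of $h_A$, the upper estimate uses a continuously varying right inverse. Fix surjective $A$ with a right inverse $R_A$, so $A R_A = I$; for $B$ near $A$ the operator $B R_A$ is invertible, and $R_B = R_A (B R_A)^{-1}$ is a right inverse of $B$ depending continuously on $B$. If $x^*$ is minimal for $(A,y)$, then $x' = x^* + R_B(y' - B x^*)$ solves $B x' = y'$ and satisfies $\|x' - x^*\| \le \|R_B\|\,\|y' - B x^*\| \to 0$ as $(B,y') \to (A,y)$, giving $\limsup h_B(y') \le h_A(y)$. The lower estimate is a compactness argument: along any sequence $(B_k, y_k) \to (A,y)$ the minimizers $x_k$ are bounded, since $h_{B_k}(y_k) \le \|R_{B_k}\|\,\|y_k\|$ with $\|R_{B_k}\|$ bounded, so a subsequence has $x_k \to x$; then $B_k x_k = y_k \to y$ and $B_k x_k \to A x$ force $A x = y$, whence $h_A(y) \le \|x\| = \lim\|x_k\|$ and $\liminf h_{B_k}(y_k) \ge h_A(y)$. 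For a square full rank matrix both representations apply, and a routine check shows they agree.

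The main obstacle is precisely the surjective case, and it is exactly here that the restriction to the open set of full rank matrices is indispensable. Both the construction of a continuously varying right inverse and the uniform bound on the minimizers rely on surjectivity being stable under small perturbations; when the rank drops this stability is lost, $\|A\|_\ell \to 0$, and the inverse bounds blow up, so continuity genuinely fails at the boundary of the full rank set. Confining attention to full rank matrices is therefore not a convenience but a necessity for the conclusion.
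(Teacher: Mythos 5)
Your argument is correct, but note that the paper itself offers no proof of this lemma: it is imported verbatim from \cite[p.~212, cor.~4.3]{Grcar2010}, so there is no internal proof to match against. Taken on its own terms, your proof is a sound, self-contained replacement. The reduction to the two full-rank cases is legitimate because each is an open condition, so the case split is locally constant and the two variational formulas need only be reconciled on square matrices, which you do. The injective case via $\|A\|_\ell = \min_{\|x\|=1}\|Ax\|$ and its $1$-Lipschitz dependence on $A$ is immediate. The surjective case --- the only one the paper actually uses, since in Lemma \ref{lem:ml} the matrices $A(x)$ have full row rank --- is handled correctly: the identification $\|A\|_\ell = 1/\max_{\|y\|=1} h_A(y)$ follows from Definition \ref{def:lower} by optimizing over the minimal-norm solution for each $y$ and using homogeneity of $h_A$; the continuously varying right inverse $R_B = R_A (BR_A)^{-1}$ gives the upper semicontinuity of $h$; and the compactness argument with the uniform bound $h_{B_k}(y_k) \le \|R_{B_k}\|\,\|y_k\|$ gives the lower semicontinuity. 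Passing from joint continuity of $h$ to continuity of the maximum over the fixed compact unit sphere is the standard uniform-continuity argument, and positivity of $G$ (Lemma \ref{lem:exists}) lets you invert. The one stylistic remark is that your closing paragraph argues that full rank is \emph{necessary} for continuity; that is true and illuminating but not part of the claim, so it could be omitted without loss.
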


The present use of the lower bound is in the following lemma.

\begin {lemma} 
[Uniform lower bounds for partial derivatives]
\label {lem:ml}
Under hypotheses \hypotheses 5, there is a neighborhood $\ml$ of $x_0$ where $\d1 F (y_0, x) : \Reals^m \rightarrow \Reals^p$ is onto for every $x \in \ml$. There is also a number $\mlb > 0$ such that every $x \in \ml$ and $u \in \Reals^p$ have some $w \in \Reals^m$ (which depends on $x$ and $u$) so that $\d1 F (y_0, x) w = u$ and $\mlb \| w \| \le \| u \|$.
\end {lemma}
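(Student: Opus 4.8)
The plan is to derive the uniform bound from the continuity of the map $x \mapsto \d1 F (y_0, x)$ together with the two properties of the matrix lower bound recorded in Lemmas \ref{lem:exists} and \ref{lem:continuous}. First I would note that by hypothesis \ref{hyp:5} the matrix $\d1 F (y_0, x_0) : \Reals^m \rightarrow \Reals^p$ is onto, hence of full rank $p$. Since $F$ is continuously Fr\'echet differentiable (hypothesis \ref{hyp:3}), the partial derivative $x \mapsto \d1 F (y_0, x)$ is continuous. Being onto is an open condition on matrices, so there is a neighborhood $\ml$ of $x_0$ throughout which $\d1 F (y_0, x)$ remains full rank and therefore onto. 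This already gives the first assertion of the lemma.

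The uniform bound then rests on the observation that on $\ml$ the function $x \mapsto \| \d1 F (y_0, x) \|_\ell$ is continuous and strictly positive. Its continuity follows from Lemma \ref{lem:continuous}, because every $\d1 F (y_0, x)$ with $x \in \ml$ is full rank and the lower bound is continuous on full rank matrices; its positivity at each point follows from Lemma \ref{lem:exists}. In particular the value at $x_0$ is some $c > 0$, and by continuity I would shrink $\ml$ if necessary so that $\| \d1 F (y_0, x) \|_\ell \ge c/2$ for all $x \in \ml$, then set $\mlb = c/2$.

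Finally I would unwind Definition \ref{def:lower}. Fix $x \in \ml$ and $u \in \Reals^p$. Since $\d1 F (y_0, x)$ is onto, its column space is all of $\Reals^p$, so $u$ lies in that column space. The definition of the matrix lower bound then furnishes a $w$ (depending on $x$ and $u$) with $\d1 F (y_0, x) \, w = u$ and $\| \d1 F (y_0, x) \|_\ell \, \| w \| \le \| u \|$, whence $\mlb \, \| w \| \le \| u \|$, which is exactly what is claimed. The only real care needed is bookkeeping: one must take the two neighborhoods (where the matrices stay onto, and where the lower bound stays above $c/2$) to be a single neighborhood $\ml$, and check that Lemma \ref{lem:continuous} genuinely applies there because full rank is preserved. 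Beyond that the argument is a direct application of the cited results, so I do not anticipate any deeper obstacle.
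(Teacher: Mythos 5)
Your proposal is correct and follows essentially the same route as the paper: establish onto-ness on a neighborhood via continuity of $x \mapsto \d1 F(y_0,x)$ and the openness of full row rank, then combine Lemmas \ref{lem:exists} and \ref{lem:continuous} to bound $\| \d1 F(y_0,x) \|_\ell$ uniformly below, and finally unwind Definition \ref{def:lower}. The only cosmetic difference is that the paper takes a compact neighborhood and uses the uniform positivity of a continuous positive function there, whereas you shrink the neighborhood so the lower bound stays above half its value at $x_0$; both are valid.
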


\begin {proof}
Choose some bases for $\Reals^m$ and $\Reals^p$ so that these spaces are represented by real column vectors. The linear transformations $\d1 F (y_0, x)$ are then represented by $p \times m$ matrices, $A(x)$. By Hypothesis \ref {hyp:} (\ref {hyp:3}) $F$ is continuously differentiable and (\ref {hyp:5}) $\d1 F (y_0, x_0)$ is onto, which mean $A(x)$ is a continuous function of $x$ and the column space of $A(x_0)$ is all of $\Reals^p$, or equivalently $A(x_0)$ has full row rank. For a matrix $M$ to have full row rank means $\det (M M^t)$ does not vanish. The determinant is a continuous function of the matrix, so $A(x_0)$ has a neighborhood of matrices $N_{A(x_0)}$ all of which have full row rank. From the continuity of $A(x)$, there is a neighborhood $N_{x_0}$ for which all matrices lie in $N_{A(x_0)}$. Hence for all $x \in N_{x_0}$ the mappings $\d1 F (y_0, x)$ are onto, or equivalently the column space of each matrix $A(x)$ is all of $\Reals^p$. 

Choose a compact neighborhood $\ml \subseteq N_{x_0}$. Since $\| A(x) \|_\ell$ is continuous and positive on $N_{x_0}$ by lemmas \ref {lem:exists} and \ref {lem:continuous}, $\| A(x) \|_\ell$ is uniformly bounded below on $\ml$ by some $\mlb > 0$. If $x \in \ml$ and $u \in \Reals^p$, then since the column space of $A(x)$ is all of $\Reals^p$, by definition \ref {def:lower} there is $w \in \Reals^m$ so $\d1 F (y_0, x) w = A(x) w = u$ and $\| A(x) \|_\ell \, \| w \| \le \| u \|$. Further, $\mlb \le \| A(x) \|_\ell$ by the choice of $\ml$.
\end {proof}

\subsection {Uniformly Colocated Level Sets}
\label {sec:level}

Suppose $D$ is an open set in $\Reals^m$, on which $f : D \rightarrow \Reals^p$ is continuously differentiable. By analogy with real-valued functions, the set $f^{-1} (a)$ may be called a level set of $f$. 
It is possible to make a geometric comparison between the level sets of $f$ and those of its tangent function at $y_0$. For functions such as $F$ that vary smoothly with a parameter, the distance between the corresponding level sets is uniformly bounded with respect to changes in the parameter. The proof is a modification of a construction apparently due to L.\ M.\ Graves \cite {Graves1950}, see also \cite [p.\ 378, theorem 41.6] {Bartle1976}. 

\begin {lemma} 
[Uniformly colocated level sets]
\label {lem:colocated}
Under hypotheses \hypotheses 5, for every $\epsilon > 0$ there is a radius $r(\epsilon) > 0$ and a neighborhood $\colocated$ of $x_0$ so $\cl {\ball {y_0} {r(\epsilon)}} \times \colocated \subseteq {\mathcal D}$. For each pair $(y, x) \in \ball {y_0} {r(\epsilon) / (1 + \epsilon)} \times \colocated$:
\medskip
\begin {center}
\renewcommand {\tabcolsep} {0.5em}
\begin {tabular} {c | c | c | c |}
\vrule depth1ex height0ex width0pt & {\rm (a)} there exists& {\rm (b)} with& {\rm (c)} and with\\
\hline 
\vrule depth1ex height2.5ex width0pt {\rm (1)}& $\yone \in \ball {y_0} {r(\epsilon)}$& $\TanY (\yone, x) = F (y, x)$& $\| \yone - y \| \le \epsilon \, \| y - y_0 \|$\\ 
\vrule depth1.25ex height2.5ex width0pt {\rm (2)}& $y_F^{} \in \cl {\ball {y_0} {r(\epsilon)}}$& $F (y_F^{}, x) = \TanY (y, x)$& $\| y_F^{} - y \| \le \epsilon \, \| y - y_0 \|$\\
\hline
\end {tabular}
\end {center}
\medskip
\end {lemma}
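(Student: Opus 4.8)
The plan is to lean on just two earlier results: Corollary \ref{cor:Frechet}, which controls $\| F(y,x) - \TanY(y,x) \|$ uniformly in $x$, and Lemma \ref{lem:ml}, which supplies a fixed neighborhood $\ml$ and a fixed constant $\mlb > 0$ making $\d1 F(y_0,x)$ uniformly onto with a uniform lower bound. Conclusion (1) asks us to hit the value $F(y,x)$ with the \emph{affine} map $\TanY(\cdot,x)$, which I would do by a single linear solve; conclusion (2) asks us to hit the value $\TanY(y,x)$ with the \emph{nonlinear} map $F(\cdot,x)$, which I would obtain by a Graves-type iteration. The hard part will be the iteration for (2): verifying that it stays inside the ball where Lemma \ref{lem:mean} applies and that it converges geometrically, all uniformly in $x$.

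First I would fix the constants. Set $q = \epsilon/(1+\epsilon)$ and feed $\eta = \mlb \, q$ into Corollary \ref{cor:Frechet}, obtaining neighborhoods $\MVTd \eta$ and $\MVTs \eta$ on which $\| F(y,x) - \TanY(y,x) \| \le \eta \, \| y - y_0 \|$. Then I would pick $r(\epsilon) > 0$ small enough that $\cl {\ball {y_0} {r(\epsilon)}} \subseteq \MVTd \eta$, set $\colocated = \MVTs \eta \cap \ml$, and shrink $r(\epsilon)$ and $\colocated$ further if needed so $\cl {\ball {y_0} {r(\epsilon)}} \times \colocated \subseteq {\mathcal D}$. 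Throughout, $(y,x)$ denotes a fixed pair in $\ball {y_0} {r(\epsilon)/(1+\epsilon)} \times \colocated$, and the inequality $\| y - y_0 \| < r(\epsilon)/(1+\epsilon)$ is what will keep everything inside the working ball.

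Conclusion (1) I would settle directly. Since $\TanY(\cdot,x)$ is affine with linear part $\d1 F(y_0,x)$, the equation $\TanY(\yone,x) = F(y,x)$ is equivalent to $\d1 F(y_0,x)(\yone - y) = F(y,x) - \TanY(y,x)$. By Lemma \ref{lem:ml} this has a solution with $\mlb \, \| \yone - y \| \le \| F(y,x) - \TanY(y,x) \| \le \eta \, \| y - y_0 \|$, so $\| \yone - y \| \le q \, \| y - y_0 \| \le \epsilon \, \| y - y_0 \|$. The triangle inequality then gives $\| \yone - y_0 \| \le (1+\epsilon) \| y - y_0 \| < r(\epsilon)$, placing $\yone$ in $\ball {y_0} {r(\epsilon)}$.

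Conclusion (2) is the main obstacle, and here I would iterate. Write $a = \TanY(y,x)$, set $y^{(0)} = y$, and given $y^{(k)} \in \ball {y_0} {r(\epsilon)}$ use Lemma \ref{lem:ml} to choose $\delta^{(k)}$ with $\d1 F(y_0,x)\,\delta^{(k)} = a - F(y^{(k)},x)$ and $\mlb \, \| \delta^{(k)} \| \le \| a - F(y^{(k)},x) \|$, putting $y^{(k+1)} = y^{(k)} + \delta^{(k)}$. Because the correction cancels the linearized residual, the new residual is $F(y^{(k+1)},x) - a = F(y^{(k+1)},x) - F(y^{(k)},x) - \d1 F(y_0,x)\,\delta^{(k)}$, which Lemma \ref{lem:mean} (with $y_3 = y_0$) bounds by $\eta \, \| \delta^{(k)} \| \le q \, \| F(y^{(k)},x) - a \|$. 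The residuals therefore contract by $q < 1$, giving $\| \delta^{(k)} \| \le q^{k+1} \| y - y_0 \|$ and $\| y^{(k)} - y \| \le (q/(1-q)) \| y - y_0 \| = \epsilon \, \| y - y_0 \|$; the same triangle-inequality estimate as in (1) then yields $\| y^{(k)} - y_0 \| \le (1+\epsilon)\| y - y_0 \| < r(\epsilon)$, so every iterate stays where Lemma \ref{lem:mean} is valid and the hypotheses of the next step hold. The partial sums being Cauchy, the iterates converge to a limit $y_F^{} \in \cl {\ball {y_0} {r(\epsilon)}}$ with $\| y_F^{} - y \| \le \epsilon \, \| y - y_0 \|$, and since $F(y^{(k)},x) \to a$ with $F$ continuous, $F(y_F^{},x) = a = \TanY(y,x)$. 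The one delicate point throughout is the in-ball check before each step, which is precisely what the factor $1/(1+\epsilon)$ in the domain of $y$ is designed to guarantee.
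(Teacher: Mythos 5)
Your proposal is correct and follows essentially the same route as the paper: part (1) by a single application of Lemma \ref{lem:ml} to the linearized residual, and part (2) by the Graves-type iteration with corrections solved through $\d1 F(y_0,x)$, contracting residuals at rate $\epsilon/(1+\epsilon)$, with the same choice of constants ($\eta = \mlb\,\epsilon/(1+\epsilon)$, $\colocated = \ml \cap \MVTs{\eta}$) and the same in-ball bookkeeping via $\|y^{(k)} - y_0\| \le (1+\epsilon)\|y-y_0\| < r(\epsilon)$. The only difference is cosmetic: the paper tracks the induction through two explicit conditions on successive differences and residuals, while you track the residual contraction directly.
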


\begin {proof}
Lemma \ref {lem:colocated} has the first of the two most complicated proofs in this paper. Let $\delta = \epsilon / (1 + \epsilon) < 1$. Let $\mlb$ be the lower bound for the neighborhood $\ml$ in lemma \ref {lem:ml}. Choose a radius $r (\epsilon) > 0$ so that 
\begin {equation}
\label {eqn:r(e)}
\cl {\ball {y_0} {r (\epsilon)}} \subseteq \MVTd {\delta \, \mlb} \, .
\end {equation}
The neighborhoods from which the lemma is allowed to choose $y$ and $x$ are
\begin {eqnarray}
\label {eqn:choiceofy}
y \in \ball {y_0} {r (\epsilon) / (1+ \epsilon)} \subseteq \cl {\ball {y_0} {r(\epsilon)}}& \subseteq& \MVTd {\delta \, \mlb} \, ,\\
\noalign {\medskip}
\label {eqn:choiceofx}
x \in \colocated := \ml \cap \MVTs {\delta \, \mlb}& \subseteq& \MVTs {\delta \, \mlb} \, .
\end {eqnarray}
Note the product $\ball {y_0} {r(\epsilon)} \times \colocated$ is a subset of $\MVTd {\delta \, \mlb} \times \MVTs {\delta \, \mlb}$ in $\mathcal D$ by lemma \ref {lem:mean}.

(Part 1.) Because $\d1 F (y_0, x) : \Reals^m \rightarrow \Reals^p$ is onto, the range of the transformation contains the vector $F (y, x) - \TanY (y, x)$, and because $x \in \ml$ by (\ref {eqn:choiceofx}), lemma \ref {lem:ml} finds a $\ytemp$ with
\begin {eqnarray} 
\label {eqn:equality}
\d1 F (y_0, x) \, \ytemp& =& F (y, x) - \TanY (y, x) \, ,\\
\noalign {\medskip} 
\label {eqn:inequality}
\mbox {and} \quad \mlb \, \| \ytemp \|& \le& \| F (y, x) - \TanY (y, x) \| \, .
\end {eqnarray}
Let $y_1 = \ytemp+ y$ so $\ytemp = \yone - y$. The equality (\ref {eqn:equality}) and some algebra imply 
\begin {eqnarray*}
\TanY (\yone, x)
& =& \d1 F (y_0, x) (\yone - y_0) + F(y_0, x) 
\note {by definition of $\TanY$ in Table \ref {tab:simplifications}}\\
& =& \big[ \d1 F (y_0, x) (\yone - y) \big] + \big[\d1 F (y_0, x) (y - y_0) + F(y_0, x) \big]
\note {inserting $\pm y$}\\
& =& \big[ F (y, x) - \TanY (y, x) \big] + \TanY (y, x)
\note {by (\ref {eqn:equality}) and by definition of $\TanY$}\\
& =& F (y, x) \,
\end {eqnarray*}
which is part (1b). Further,
\begin {eqnarray*} 
\| \yone - y \| 
& \le& \displaystyle {\| F (y, x) - \TanY (y, x) \| \over \mlb}
\note {from (\ref {eqn:inequality})}\\ \noalign {\medskip}
& \le& \displaystyle {(\delta \, \mlb) \, \| y - y_0 \| \over \mlb}
\note {by (\ref {eqn:Frechet}) and $y \in \MVTd {\delta \, \mlb}$ in (\ref {eqn:choiceofy})}\\ \noalign {\smallskip}
& =& \delta \, \| y - y_0 \|\\ \noalign {\smallskip}
& <& \epsilon \, \| y - y_0 \| 
\note {by the choice $\delta = \epsilon / (1 + \epsilon)$},
\end {eqnarray*}
which is part (1c). Finally,
\begin {eqnarray*}
\| \yone - y_0 \|
& \le& \| \yone - y \| + \| y - y_0 \|\\
& \le& \epsilon \, \| y - y_0 \| + \| y - y_0 \|
\note {from (1c)}\\
& =& (1 + \epsilon) \, \| y - y_0 \|\\
& <& r (\epsilon)
\note {from the choice $y \in \ball {y_0} {r (\epsilon) / (1+ \epsilon)}$ in (\ref {eqn:choiceofy})}.
\end {eqnarray*}
Therefore $\yone \in \ball {y_0} {r (\epsilon)}$, which is part (1a).

(Part 2.) Let $y_1 = y$ from (\ref {eqn:choiceofy}). This $y_1$ and $y_0$ begin a sequence $\{ y_n \}$ to be built subject to the conditions:
\begin {displaymath}
\begin {array} {r l l}
(1_n)& \displaystyle \| y_{n+1} - y_n \| \le \delta^n \, \| y - y_0 \| \, ,\\
\noalign {\medskip}
(2_n)& \displaystyle \| F (y_{n+1}, x) - \TanY (y, x) \| \le (\delta \, \mlb) \, \| y_{n+1} - y_n \| \, .
\end {array}
\end {displaymath} 
Condition $(1_0)$ is just $\| y - y_0 \| \le \| y - y_0 \|$. Condition $(2_0)$ is (\ref {eqn:Frechet}) in corollary \ref {cor:Frechet} which is applicable by the choices of $y_1 = y$ and $x$ in equations (\ref {eqn:choiceofy}) and (\ref {eqn:choiceofx}).

Suppose $y_0$, $y_1$, $\ldots,$ $y_k$ have been constructed to satisfy $(1_n)$ and $(2_n)$ for $0 \le n \le k - 1$. The selection of $y_{k+1}$ proceeds as for $y_1$ in the first half of the proof. Again because $\d1 F (y_0, x) : \Reals^m \rightarrow \Reals^p$ is onto, the transformation maps to $- \left[ F (y_k, x) - \TanY (y, x) \right]$, and because $x \in \ml$ by (\ref {eqn:choiceofx}), it is possible to invoke lemma \ref {lem:ml} to find a $\ytemp$ with
\begin {eqnarray} 
\label {eqn:equality2}
\d1 F (y_0, x) \, \ytemp& =& - \left[ F (y_k, x) - \TanY (y, x) \right] \, ,\\
\noalign {\medskip} 
\label {eqn:inequality2}
\mbox {and} \quad \mlb \, \| \ytemp \|& \le& \| F (y_k, x) - \TanY (y, x) \| \, .
\end {eqnarray}
Let $y_{k+1} = \ytemp+ y_k$ so $\ytemp = y_{k+1} - y_k$. For this choice of $y_{k+1}$,
\begin {eqnarray*}
\| y_{k+1} - y_k \| 
& \le& \displaystyle {\| F (y_k, x) - \TanY (y, x) \| \over \mlb}
\note {from (\ref {eqn:inequality2})}\\ \noalign {\smallskip}
& \le& \displaystyle {(\delta \, \mlb) \, \| y_k - y_{k-1} \| \over \mlb}
\note {from $(2_{k-1})$}\\ \noalign {\smallskip}
& =& \delta \, \| y_k - y_{k-1} \|\\ \noalign {\smallskip}
& <& \delta^k \, \| y - y_0 \| 
\note {from $(1_k)$},
\end {eqnarray*}
which is $(1_k)$. Summing $(1_n)$ for $0 \le n \le k$ gives 
\begin {displaymath}
\| y_{k+1} - y_0 \| \le \sum_{n = 0}^k \| y_{n+1} - y_n \| \le {1 - \delta^{k+1} \over 1 - \delta} \, \| y - y_0 \| < (1 + \epsilon) \, \| y - y_0 \| \, ,
\end {displaymath}
which easily follows from the choice $\delta = \epsilon / (1 + \epsilon)$. This inequality combines with $y \in \ball {y_0} {r(\epsilon)/(1+\epsilon)}$ to place $y_{k+1} \in \ball {y_0} {r(\epsilon)} \subseteq \MVTd {\delta \, \mlb}$ from equation (\ref {eqn:r(e)}), and then $(y_{k+1}, x) \in {\mathcal D}$. Thus, the evaluation of $F (y_{k+1}, x)$ is well defined. Further,
\begin {eqnarray*}
&& \hspace*{-2em} \| F (y_{k+1}, x) - \TanY (y, x) \|\\ 
& =& \| F (y_{k+1}, x) - F (y_k, x) - \left\{ - \left[ F (y_k, x) - \TanY (y, x)\right]\right\} \|
\note {inserting $\pm F (y_k, x)$}\\
& =& \| F (y_{k+1}, x) - F (y_k, x) - \d1 F (y_0, x) (y_{k+1} - y_k) \|
\note {from (\ref {eqn:equality2})}\\
& \le& (\delta \, \mlb) \, \| y_{k+1} - y_k \| 
\note {from (\ref {eqn:mean})},
\end {eqnarray*}
which is $(2_{k})$.

In this way a sequence $\{ y_n \} \subseteq \ball {y_0} {r (\epsilon)}$ is constructed that satisfies conditions $(1_n)$ and $(2_n)$ for all $n$. The sequence is a Cauchy sequence by $(1_n)$, so it has a limit $y_F \in \cl {\ball {y_0} {r (\epsilon)}}$, which is part (2a). Passing to the limit in $(2_n)$ shows $F (y_F, x) = \TanY (y, x)$, which is part (2b). Summing $(1_n)$, now for $1 \le n \le k$, gives 
\begin {displaymath}
\| y_{k+1} - y \| = \| y_{k+1} - y_1 \| 
\le \sum_{n = 1}^k \| y_{n+1} - y_n \| \le \delta \, {1 - \delta^k \over 1 - \delta} \, \| y - y_0 \| \, ,
\end {displaymath}
which in the limit becomes (2c), $\| y_F - y \| \le \delta (1 - \delta)^{-1} \| y - y_0 \| = \epsilon \| y - y_0 \|$.
\end {proof}

\subsection {Proof of the First Equivalence}
\label {sub:first}
\quad

\begin {theorem} 
[$\probmin{} \equiv \probmin1$]
\label {thm:first}
Under hypotheses \hypotheses 5, there is a neighborhood of $x_0$ where both optimization problems $\probmin{}$ and $\probmin1$ of Table \ref {tab:equivalences} are well defined. Their values are asymptotically equal at $x_0$ in the sense of definition \ref {def:rational}.
\end {theorem}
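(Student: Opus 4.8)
The proof I have in mind is a direct two-sided application of the uniformly colocated level sets of Lemma \ref{lem:colocated}, once existence of the minima is settled. Both $\probmin{}$ and $\probmin1$ attain their minima on a common neighborhood of $x_0$: this is Lemma \ref{lem:Lipschitz} applied to $F$ together with Corollary \ref{cor:Lipschitz} applied to $\Function1$. Intersecting those existence neighborhoods (and the colocated neighborhood $\colocated$) gives the neighborhood on which $\MuF$ and $\Mu1$ are simultaneously defined, and the whole argument takes place there. The key observation is that Lemma \ref{lem:colocated} converts a root of one function at a parameter value $x$ into a nearby root of the other function at the \emph{same} $x$, so I can transport minimizers back and forth to sandwich the two values.

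First I would bound $\Mu1$ above. Fix $\epsilon > 0$ and let $y$ be a minimizer for $\probmin{}$ at $x$, so $F(y,x) = 0$ and $\|y - y_0\| = \MuF(x)$. Part (1) of Lemma \ref{lem:colocated} produces $\yone$ with $\Function1(\yone, x) = F(y,x) = 0$ and $\|\yone - y\| \le \epsilon\,\|y - y_0\|$; hence $\yone$ is feasible for $\probmin1$, and $\Mu1(x) \le \|\yone - y_0\| \le \|\yone - y\| + \|y - y_0\| \le (1+\epsilon)\,\MuF(x)$. Running the same argument in reverse through Part (2), starting from a minimizer $y$ of $\probmin1$ (where $\Function1(y,x)=0$) and obtaining a point $y_F^{}$ feasible for $\probmin{}$ with $\|y_F^{} - y\| \le \epsilon\,\|y - y_0\|$, gives the companion estimate $\MuF(x) \le (1+\epsilon)\,\Mu1(x)$.

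Combining the two estimates yields ${1 \over 1+\epsilon}\,\Mu1(x) \le \MuF(x) \le (1+\epsilon)\,\Mu1(x)$, and because ${1 \over 1+\epsilon} \ge 1-\epsilon$ this is precisely the two-sided bound $(1-\epsilon)\,\Mu1(x) \le \MuF(x) \le (1+\epsilon)\,\Mu1(x)$ demanded by Definition \ref{def:rational}, establishing $\MuF \rational{x_0} \Mu1$. The one step that requires care, and essentially the only obstacle once Lemma \ref{lem:colocated} is granted, is that the colocated lemma admits only arguments $y$ in the smaller ball $\ball{y_0}{r(\epsilon)/(1+\epsilon)}$. To force the minimizers produced above into this ball I would invoke the Lipschitz bounds $\MuF(x) \le L\,\|x-x_0\|$ and $\Mu1(x) \le L_1\,\|x-x_0\|$ from Lemma \ref{lem:Lipschitz} and Corollary \ref{cor:Lipschitz}: a minimizer satisfies $\|y - y_0\| = \MuF(x)$ (respectively $\Mu1(x)$), so restricting $x$ to $\|x-x_0\| < r(\epsilon)\big/\big((1+\epsilon)\max\{L,L_1\}\big)$ guarantees $\|y - y_0\| < r(\epsilon)/(1+\epsilon)$. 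Taking $N(\epsilon)$ to be $\colocated$ intersected with that ball and with the two existence neighborhoods completes the proof.
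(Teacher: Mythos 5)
Your proposal is correct and follows essentially the same route as the paper: establish existence and the Lipschitz bounds via Lemma \ref{lem:Lipschitz} and Corollary \ref{cor:Lipschitz}, restrict $x$ so that minimizers land in the ball $\ball{y_0}{r(\epsilon)/(1+\epsilon)}$, and then apply Parts (1) and (2) of Lemma \ref{lem:colocated} to get the two one-sided bounds $\Mu1(x) \le (1+\epsilon)\MuF(x)$ and $\MuF(x) \le (1+\epsilon)\Mu1(x)$. The paper leaves the final algebraic step ("the two upper-side inequalities imply (\ref{eqn:rational})") implicit, whereas you spell out that $1/(1+\epsilon) \ge 1-\epsilon$; otherwise the arguments coincide.
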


\begin {proof}
By lemma \ref {lem:Lipschitz}, $x_0$ has a neighborhood $\Lipschitz$ where problem $\probmin{}$ is well defined for every $x \in \Lipschitz$, and the optimal value, $\MuF (x)$, is Lipschitz continuous at $x_0$ with constant $L$.

By corollary \ref {cor:Lipschitz} similarly, $x_0$ has a neighborhood $\LipschitzForOther1$ where problem $\probmin1$ is well defined for every $x \in \LipschitzForOther1$, and the optimal value, $\Mu1 (x)$, is Lipschitz continuous at $x_0$ with constant $L_1$.

Let $\ball {y_0} {r(\epsilon)/(1+\epsilon)} \times \colocated$ be the neighborhood of $(y_0, x_0)$ in lemma \ref {lem:colocated}, and let
\begin {displaymath}
N(\epsilon) = \Lipschitz \, \cap \, \LipschitzForOther1 \, \cap \, \colocated \, \cap \, \Ball {x_0} {\min \, \{ L^{-1}, \, L_1^{-1} \} { r(\epsilon) \over 1+\epsilon}} \, .
\end {displaymath} 
Note the ball in this formula is around $x_0$ rather than $y_0$.

Suppose $x \in N (\epsilon)$. Let $\MuF (x)$ be attained at $y$. By lemma \ref {lem:Lipschitz} and $x \in \ball {x_0} {L^{-1} r(\epsilon)$ $/(1+\epsilon)}$, therefore
\begin {displaymath}
\| y - y_0 \| = \MuF (x) \le L \| x - x_0 \| < r(\epsilon)/(1+\epsilon) \, ,
\end {displaymath}
which places $(y, x) \in \ball {y_0} {r(\epsilon)/(1+\epsilon)} \times \colocated$. Part 1 of lemma \ref {lem:colocated} now asserts there is a $\yone \in \ball {y_0} {r(\epsilon)}$ with
\begin {displaymath}
\| \yone - y \| \le \epsilon \, \| y - y_0 \| \quad \mbox
{and} \quad
\TanY (\yone, x) = F (y, x) = 0 \, .
\end {displaymath}
Thus
\begin {displaymath}
\Mu1 (x) \le \| \yone - y_0 \| 
\le \| \yone - y \| + \| y - y_0 \|
\le (1 + \epsilon) \, \| y - y_0 \|
= (1 + \epsilon) \, \MuF (x)
\end {displaymath}
which is the upper side of (\ref {eqn:rational}) in definition \ref {def:rational}. The inequality with $\MuF$ and $\Mu1$ exchanged is established by the same argument using $L_1$ instead of $L$, corollary \ref {cor:Lipschitz} instead of lemma \ref {lem:Lipschitz}, and lemma \ref {lem:colocated} part 2 instead of part 1. The two upper-side inequalities imply (\ref {eqn:rational}).
\end {proof}

\section {Equalities for the Dual Problems}
\label {sec:duality}

The duality theory for best linear approximation guarantees that the three pairs of dual problems in Table \ref {tab:equivalences} have equal values. Equalities like these are well known and can be established in many ways. These are derived from the following duality theorem that Luenberger \cite [p.\ 119, thm.\ 1] {Luenberger1969} proves directly from the Hahn-Banach theorem. 

\begin {theorem} 
[Best linear approximation]
\label {thm:best}
If $\Set$ is a subspace and $y_0$ is an element of a real, normed linear space, then
\begin {displaymath}
\Inf {$y \in \Set$} \| y - y_0 \| \; = \Max {$f \in \, \Set^\perp, \; \| f \| \le 1$} f (y_0) \, .
\end {displaymath}
\end {theorem}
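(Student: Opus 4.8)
The plan is to prove the two inequalities separately, with the substantive direction supplied by the Hahn--Banach theorem. Write $d = \Inf {$y \in \Set$} \| y - y_0 \|$ for the left-hand side. For the easy inequality, note that any $f$ admissible in the maximization satisfies $f \in \Set^\perp$ and $\| f \| \le 1$, so for every $y \in \Set$, using $f(y) = 0$ and linearity, one has $f(y_0) = f(y_0 - y) \le \| f \| \, \| y_0 - y \| \le \| y_0 - y \|$. Taking the infimum over $y \in \Set$ gives $f(y_0) \le d$, and the supremum over admissible $f$ then gives $\Max {$f \in \Set^\perp, \; \| f \| \le 1$} f(y_0) \le d$.

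For the reverse inequality I would exhibit a single admissible functional that attains the value $d$, which simultaneously shows the supremum is a genuine maximum. If $d = 0$ the zero functional already works, so suppose $d > 0$; then $y_0 \notin \Set$, and every element of the subspace $\Set + \reals \, y_0$ has a \emph{unique} representation $y + t y_0$ with $y \in \Set$ and $t \in \reals$. Define a linear functional $g$ on this subspace by $g(y + t y_0) = t \, d$. By construction $g$ vanishes on $\Set$, so $g \in \Set^\perp$, and $g(y_0) = d$.

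The key step is the bound $\| g \| \le 1$ on $\Set + \reals \, y_0$. The case $t = 0$ is immediate, and for $t \ne 0$ one factors $\| y + t y_0 \| = |t| \, \| y_0 - (-y/t) \|$; since $-y/t \in \Set$, the final norm is at least $d$, so $\| y + t y_0 \| \ge |t| \, d = | g(y + t y_0) |$. Hence $\| g \| \le 1$. The Hahn--Banach theorem now extends $g$ to a functional $f_0$ on the whole space with $\| f_0 \| = \| g \| \le 1$ and $f_0 = g$ on $\Set + \reals \, y_0$; in particular $f_0 \in \Set^\perp$, $\| f_0 \| \le 1$, and $f_0(y_0) = d$. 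Thus $f_0$ is admissible and the maximization value is at least $d$, which together with the easy inequality yields equality and confirms that the supremum is attained.

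The main obstacle is precisely the norm estimate $\| g \| \le 1$, since that is where the geometric content of the distance $d$ is converted into an analytic bound on $g$; the remaining steps are bookkeeping, and I would cite the Hahn--Banach extension as a black box rather than reprove it.
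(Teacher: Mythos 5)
Your proof is correct, and it is essentially the argument the paper relies on: the paper does not prove Theorem \ref{thm:best} itself but cites Luenberger's proof ``directly from the Hahn--Banach theorem,'' which is exactly your construction --- the easy bound from $f(y_0)=f(y_0-y)\le\|y_0-y\|$, followed by defining $g(y+ty_0)=td$ on $\Set+\reals\,y_0$, verifying $\|g\|\le 1$ from the distance estimate, and extending by Hahn--Banach to an admissible functional attaining $d$. No gaps; the $d=0$ case and the uniqueness of the representation $y+ty_0$ (which needs $y_0\notin\Set$, guaranteed by $d>0$) are both handled properly.
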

\vspace {-1ex}
\begin {corollary}
[Best affine approximation]
\label {cor:best} 
If $\mathcal A$ is an affine subspace and $y_0$ is an element of a real, normed linear space, then
\begin {displaymath}
\Inf {$y \in \mathcal A$} \| y - y_0 \| \; = \Max {$f \in ({\mathcal
A}-a)^\perp, \; \| f \| \le 1$} f (y_0 - a)
\end {displaymath}
in which $a$ is any element of $\mathcal A$.
\end {corollary}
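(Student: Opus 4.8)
The plan is to deduce the affine statement directly from Theorem \ref{thm:best} by translating the affine subspace to the origin. First I would fix any $a \in \mathcal{A}$ and set $\Set = \mathcal{A} - a = \{\, y - a : y \in \mathcal{A} \,\}$. Because $\mathcal{A}$ is an affine subspace, $\Set$ is a genuine linear subspace, and it is moreover independent of the chosen basepoint: if $a, a' \in \mathcal{A}$ then $y - a + a'$ is an affine combination of elements of $\mathcal{A}$ with coefficients summing to $1$, hence lies in $\mathcal{A}$, so $\mathcal{A} - a = \mathcal{A} - a'$. With this notation every $y \in \mathcal{A}$ is written uniquely as $y = a + s$ with $s \in \Set$, and $y \mapsto y - a$ is a bijection of $\mathcal{A}$ onto $\Set$.

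Next I would perform the change of variables $s = y - a$ in the infimum on the left. Since $\| y - y_0 \| = \| (a + s) - y_0 \| = \| s - (y_0 - a) \|$, this bijection gives
\[
\Inf {$y \in \mathcal{A}$} \| y - y_0 \| \; = \; \Inf {$s \in \Set$} \| s - (y_0 - a) \| \, .
\]
The right-hand side is exactly the left-hand side of Theorem \ref{thm:best}, now applied to the subspace $\Set$ and to the point $y_0 - a$ in place of $y_0$. The theorem therefore yields
\[
\Inf {$s \in \Set$} \| s - (y_0 - a) \| \; = \; \Max {$f \in \Set^\perp, \; \| f \| \le 1$} f (y_0 - a) \, ,
\]
and substituting back $\Set = \mathcal{A} - a$ produces the claimed formula; the maximum is attained precisely because Theorem \ref{thm:best} already delivers one.

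I do not expect a genuine obstacle here, as this is a translation argument on top of the theorem. The only point deserving care is the assertion ``in which $a$ is any element of $\mathcal{A}$,'' namely that the right-hand side is independent of the chosen basepoint. The constraint set $(\mathcal{A} - a)^\perp = \Set^\perp$ is already basepoint-independent by the first paragraph. For the objective, if $a, a' \in \mathcal{A}$ and $f \in \Set^\perp$, then $a' - a \in \Set$ forces $f(a' - a) = 0$, so that $f(y_0 - a) = f(y_0 - a') + f(a' - a) = f(y_0 - a')$. Hence the maximand agrees for every basepoint, and the corollary is well posed as stated.
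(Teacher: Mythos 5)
Your proposal is correct and is essentially the paper's own proof, which likewise just substitutes $y-a$, $y_0-a$, and ${\mathcal A}-a$ into Theorem \ref{thm:best}; your added check that the right-hand side is independent of the basepoint $a$ is a sound (if optional) elaboration.
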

\begin {proof} 
Replace $y$, $y_0$, $\Set$ in theorem \ref {thm:best} by $y - a$, $y_0 - a$, ${\mathcal A} - a$. 
\end {proof}

\begin {corollary} 
\label {cor:matrix} 
Let $T : \Reals^m \rightarrow \Reals^p$ be a linear transformation. For every $y_0 \in \Reals^m$, each optimization problem below is well defined if and only if $h \in T (\Reals^m)$, in which case the optimal values are equal.
\begin {displaymath}
\Min {$y \in \Reals^m$ : $T y = h$} \| y - y_0 \| \; = \; \Max {$g \in
(\Reals^n)^*$ : $\| T^* g \| \le 1$} g (T y_0 - h)
\end {displaymath}
\end {corollary}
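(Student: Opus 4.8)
The plan is to recognize the minimization as a best-approximation problem over an affine subspace, invoke Corollary \ref{cor:best}, and then translate the resulting dual into adjoint form via the standard identity relating the annihilator of a kernel to the range of the adjoint. Everything takes place in finite dimensions, which keeps the annihilators, adjoints, and infima well behaved.

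First I would settle the feasibility clause. The feasible set $\mathcal{A} = \{y \in \Reals^m : T y = h\}$ is an affine subspace whenever it is nonempty, and it is nonempty precisely when $h$ lies in the range $T(\Reals^m)$; this establishes the ``well defined if and only if'' statement for the minimization. Since $\Reals^m$ is finite dimensional and $\mathcal{A}$ is closed, the infimum of $\| y - y_0 \|$ over $\mathcal{A}$ is attained, so the minimum exists. Fixing any $a$ with $T a = h$, the translate $\mathcal{A} - a$ is exactly $\ker T$, a subspace, and Corollary \ref{cor:best} then yields
\begin{displaymath}
\Min {$y : Ty = h$} \| y - y_0 \| \; = \; \Max {$f \in (\ker T)^\perp, \; \|f\| \le 1$} f (y_0 - a) \, .
\end{displaymath}
Next I would convert the right-hand side to adjoint form. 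In finite dimensions a functional $f$ on $\Reals^m$ annihilates $\ker T$ if and only if it factors through $T$, that is, $f = g \circ T = T^* g$ for some functional $g$ on $\Reals^p$; this is the identity $(\ker T)^\perp = T^* ((\Reals^p)^*)$. Substituting $f = T^* g$, the constraint $\| f \| \le 1$ becomes $\| T^* g \| \le 1$, while the objective becomes $f(y_0 - a) = g(T(y_0 - a)) = g(T y_0 - h)$, which is precisely the claimed maximand.

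The step I expect to require the most care, and where the hypothesis $h \in T(\Reals^m)$ re-enters, is verifying that the correspondence $g \mapsto T^* g$ transports the maximization faithfully even though it need not be injective. If $T^* g = 0$ then $g$ vanishes on $T(\Reals^m)$; since $h \in T(\Reals^m)$ forces $T y_0 - h \in T(\Reals^m)$, any such $g$ contributes $g(T y_0 - h) = 0$, so the objective depends only on $f = T^* g$ and the two maxima coincide (the attaining $f$ from Corollary \ref{cor:best} pulls back to some feasible $g$, so the dual maximum is likewise attained). The same observation explains the dual side of the ``if and only if'': when $h \notin T(\Reals^m)$ one can choose $g$ annihilating the range yet with $g(T y_0 - h) \neq 0$, and because $T^* g = 0$ this $g$ may be rescaled without bound, making the maximization unbounded and hence not well defined, in exact agreement with the infeasibility of the primal.
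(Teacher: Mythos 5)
Your proposal is correct and follows essentially the same route as the paper: reduce to Corollary \ref{cor:best} over the affine set $\{y : Ty = h\}$, use $(\ker T)^\perp = T^*((\Reals^p)^*)$ to rewrite the dual in adjoint form, and handle the ``if and only if'' by observing that functionals annihilating $T(\Reals^m)$ are feasible at every scale, so boundedness of the maximization forces $h \in T(\Reals^m)$. Your extra remark that the objective depends only on $f = T^*g$ (so the non-injectivity of $g \mapsto T^*g$ is harmless) is a small point the paper leaves implicit, but it is not a different argument.
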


\vspace {-1ex}
\begin {proof} The minimization is well-posed whenever $h$ is in the image of $T$. The same can be proved for the maximization. If $h \in T (\Reals^m)$, then $h = Tu$ for some $u$, so the objective function,
\begin {displaymath}
g (T y_0 - h) = g T (y_0 - u) = (T^* g) (y_0 - u) \le \| T^* g \| \, \| y_0 - u \| \le \| y_0 - u \| \, , 
\end {displaymath}
is bounded above for every $g \in (\Reals^n)^*$. The maximum is attained because the feasible set is closed in a finite dimensional space.

Conversely, suppose the maximization is well posed. If $g \in T (\Reals^n)^\perp = \ker (T^*)$, then $g$ and all its multiples are feasible. Hence $g (h) = 0$, lest by scaling $g$ it would be possible to make $g(T y_0 - h) = g(h)$ arbitrarily large. Thus $h \in {}^\perp [T (\Reals^m)^\perp] = T (\Reals^m)$.

All that remains is to establish the equality using corollary \ref {cor:best}. Choose ${\mathcal A} = \{ y \in \Reals^m : T y = h \}$ and $a \in {\mathcal A}$. Now ${\mathcal A} - a = \ker (T)$, so 
\begin {displaymath}
({\mathcal A} - a)^\perp = [\ker (T)]^\perp = [{}^\perp (T^* (\Reals^n)^*) ]^\perp = T^* (\Reals^n)^* \, .
\end {displaymath}
This means $f \in ({\mathcal A} - a)^\perp$ if and only if $f = T^* g$ for some $g \in (\Reals^n)^*$. Thus the maximization in corollary \ref {cor:best} is over all such $g$ with $\| T^* g \| = \| f \| \le 1$. Finally, the objective function is
\begin {displaymath}
f(y_0 - a) = (T^* g) (y_0 - a) = g T (y_0 - a) = g(T y_0 - T a) = g(T y_0 - h).
\end {displaymath}
\end {proof} 

\begin {theorem} 
[$\probmin i \equiv \probmax i$, $i = 1, 2, 3$] 
\label {thm:duality}
Under hypotheses \hypotheses 5, there is a neighborhood of $x_0$ where problems $\probmin1$ and $\probmax1$ of Table \ref {tab:equivalences} are well defined and their values are equal, and similarly for the $\prob2$ and $\prob3$ pairs of dual problems.
\end {theorem}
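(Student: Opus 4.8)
The plan is to recognize each of the three minimization problems as an instance of the matrix program in corollary \ref{cor:matrix}, so that the duality equality proved there applies directly; the only parameter-dependent work is to cast each constraint in the form $Ty = h$ and then to certify that $h$ lies in the range $T(\Reals^m)$. For $\probmin1$, take $T = \d1 F (y_0, x)$ and rewrite the constraint $\d1 F (y_0, x) \Delta y + F (y_0, x) = 0$ as $Ty = Ty_0 - F (y_0, x) =: h$; then $Ty_0 - h = F (y_0, x)$ and the objective $\| \Delta y \| = \| y - y_0 \|$ are exactly those of corollary \ref{cor:matrix}, whose maximization side reads $\max_{f : \| \d1 F (y_0, x)^* f \| \le 1} f(F (y_0, x))$, matching the tabulated dual. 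For $\probmin2$ the same substitution works with $T = \d1 F (y_0, x_0)$ frozen at $x_0$, again yielding $Ty_0 - h = F (y_0, x)$. For $\probmin3$, expanding $DF (y_0, x_0) (\Delta y, \Delta x) = \d1 F (y_0, x_0) \Delta y + \d2 F (y_0, x_0) \Delta x$ turns the constraint into $Ty = Ty_0 - \d2 F (y_0, x_0) \Delta x =: h$ with $T = \d1 F (y_0, x_0)$, so that $Ty_0 - h = \d2 F (y_0, x_0) \Delta x$ reproduces the third dual.

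By corollary \ref{cor:matrix}, each pair is simultaneously well defined and of equal value precisely when $h \in T(\Reals^m)$, so the next step is to verify this range condition. For $\probmin2$ and $\probmin3$ the operator $T = \d1 F (y_0, x_0)$ is fixed and onto by hypothesis \ref{hyp:} (\ref{hyp:5}); hence $T(\Reals^m) = \Reals^p$ and $h$ belongs to it automatically, the only constraint on $x$ being that $(y_0, x) \in {\mathcal D}$ so that $F (y_0, x)$ and $\Delta x$ are defined. For $\probmin1$ the operator $T = \d1 F (y_0, x)$ moves with the parameter, so hypothesis \ref{hyp:} (\ref{hyp:5}) alone does not suffice; instead I invoke lemma \ref{lem:ml}, which furnishes a neighborhood $\ml$ of $x_0$ on which $\d1 F (y_0, x)$ is onto for every $x$. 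Restricting $x$ to $\ml$ restores $T(\Reals^m) = \Reals^p$ and hence $h \in T(\Reals^m)$.

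The neighborhood claimed by the theorem may therefore be taken as $\ml$ intersected with a neighborhood of $x_0$ on which $(y_0, x) \in {\mathcal D}$; on it corollary \ref{cor:matrix} delivers all three equalities $\probmin i \equiv \probmax i$ at once. I expect no essential difficulty: the substance is the bookkeeping that recasts each constraint as $Ty = h$ and then certifies $h \in T(\Reals^m)$. The single point warranting care is that $\probmin1$ carries a parameter-dependent operator, so the \emph{uniform} ontoness of lemma \ref{lem:ml} is what is needed rather than mere ontoness at $x_0$; for $\probmin2$ and $\probmin3$ the operator is anchored at $x_0$ and hypothesis \ref{hyp:} (\ref{hyp:5}) is enough, leaving no neighborhood restriction beyond keeping $F (y_0, x)$ defined.
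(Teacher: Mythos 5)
Your proposal is correct and follows essentially the same route as the paper: both reduce each pair to corollary \ref{cor:matrix} with $T = \d1 F(y_0,x)$ or $\d1 F(y_0,x_0)$ and the same choices of $h$, using lemma \ref{lem:ml} for the parameter-dependent ontoness in $\prob1$ and hypothesis \ref{hyp:}(\ref{hyp:5}) for the others. Your explicit remark that $x$ must also be kept in a neighborhood where $(y_0,x)\in{\mathcal D}$ is a small point of care the paper leaves implicit.
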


\begin {proof} 
By lemma \ref {lem:ml}, $\d1 F (y_0, x)$ is onto for every $x \in \ml$. Therefore by corollary \ref {cor:matrix} the following problems are well defined and their values are equal for every $h \in \Reals^p$.
\begin {displaymath}
\Min {$y : \d1 F (y_0, x) y - h = 0$} \| y - y_0 \| \; = \; \Max {$f :
\| \d1 F (y_0, x)^* f \| \le 1$} f (\d1 F (y_0, x) y_0 - h) 
\end {displaymath}
Choosing $h = \d1 F (y_0, x) y_0 - F (y_0, x)$ gives the conclusion of the theorem for the $\prob1$ dual problems.

In particular $\d1 F (y_0, x_0)$ is onto, so also by corollary \ref {cor:matrix} the following problems are well defined and their optimal values are equal for every $h \in \Reals^p$.
\begin {displaymath}
\Min {$y : \d1 F (y_0, x_0) y - h = 0$} \| y - y_0 \| \; = \; \Max {$f :
\| \d1 F (y_0, x_0)^* f \| \le 1$} f (\d1 F (y_0, x_0) y_0 - h) 
\end {displaymath}
The choice $h = \d1 F (y_0, x_0) y_0 - F (y_0, x)$ gives the conclusion for the $\prob2$ dual problems; similarly $h = \d1 F (y_0, x_0) y_0 - \d2 F (y_0, x_0) (x - x_0)$ for the $\prob3$ problems.
\end {proof}

\section {Second Equivalence, $\probmax1 \equiv \probmax2$}
\label {sec:second}

The second equivalence to be proved, in the notation of Table \ref {tab:equivalences}) says that the feasible set $\{ f : \| \d1 F (y_0, x)^* f \| \le 1 \}$ can be replaced by one that is independent of $x$. The proof is self-contained and is the second of the two most complicated proofs in this paper.

\begin {theorem}
[$\probmax1 \equiv \probmax2$] 
\label {thm:second} 
Under hypotheses \hypotheses 5, there is a neighborhood of $x_0$ where both optimization problems $\probmax1$ and $\probmax2$ of Table \ref {tab:equivalences} are well defined. Their values are asymptotically equal at $x_0$ in the sense of definition \ref {def:rational}.
\end {theorem}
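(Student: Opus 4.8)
Both problems $\probmax1$ and $\probmax2$ maximize the \emph{same} linear objective $f \mapsto f(F(y_0,x))$; they differ only in their feasible sets,
\[ C_1(x) = \{\, f : \|\d1 F(y_0,x)^* f\| \le 1 \,\}, \qquad C_2 = \{\, f : \|\d1 F(y_0,x_0)^* f\| \le 1 \,\}. \]
The optimal value of a maximization of a fixed linear functional $b = F(y_0,x)$ over a set $C$ is the support value $\max_{f \in C} f(b)$, which is monotone in $C$ and positively homogeneous, $\max_{f \in \lambda C} f(b) = \lambda \max_{f \in C} f(b)$ for $\lambda > 0$. The plan is therefore to sandwich $C_1(x)$ between two scalings of the \emph{fixed} set $C_2$ whose scale factors tend to $1$ as $x \to x_0$, and then read off the corresponding sandwich for the optimal values $\Mu1(x)$ and $\Mu2(x)$.

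To obtain the sandwich I first record that, because $\d1 F(y_0,x_0)$ is onto by hypothesis \ref{hyp:5}, its adjoint $\d1 F(y_0,x_0)^*$ is one-to-one; hence $f \mapsto \|\d1 F(y_0,x_0)^* f\|$ is a norm on $(\Reals^p)^*$ and, by definition \ref{def:lower} and lemma \ref{lem:exists}, there is a constant $c = \|\d1 F(y_0,x_0)^*\|_\ell > 0$ with $\|\d1 F(y_0,x_0)^* f\| \ge c\,\|f\|$ for all $f$. Continuity of the derivative (hypothesis \ref{hyp:3}) makes $\rho(x) := \|(\d1 F(y_0,x) - \d1 F(y_0,x_0))^*\|$ arbitrarily small near $x_0$, and the triangle inequality gives
\[ \bigl|\, \|\d1 F(y_0,x)^* f\| - \|\d1 F(y_0,x_0)^* f\| \,\bigr| \le \rho(x)\,\|f\| \le \tfrac{\rho(x)}{c}\, \|\d1 F(y_0,x_0)^* f\|. \]
So, given any $\eta > 0$, there is a neighborhood of $x_0$ on which $\rho(x)/c \le \eta$, and there $(1-\eta)\|\d1 F(y_0,x_0)^* f\| \le \|\d1 F(y_0,x)^* f\| \le (1+\eta)\|\d1 F(y_0,x_0)^* f\|$ for every $f$. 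This norm equivalence translates directly into the inclusions $(1+\eta)^{-1} C_2 \subseteq C_1(x) \subseteq (1-\eta)^{-1} C_2$.

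Applying monotonicity and homogeneity of the support value at $b = F(y_0,x)$ then yields $(1+\eta)^{-1}\Mu2(x) \le \Mu1(x) \le (1-\eta)^{-1}\Mu2(x)$. Since the zero functional is always feasible, $\Mu2(x) \ge 0$, so choosing $\eta = \epsilon/(1+\epsilon)$ forces $(1-\eta)^{-1} \le 1+\epsilon$ and $(1+\eta)^{-1} \ge 1-\epsilon$, which is exactly the neighborhood $N(\epsilon)$ required by definition \ref{def:rational}. Well-definedness of both maxima on this neighborhood is already in hand: $\d1 F(y_0,x)$ is onto for $x \in \ml$ by lemma \ref{lem:ml}, so each $C_1(x)$ is the bounded unit ball of a norm and the maxima are attained (this is also covered by corollary \ref{cor:Lipschitz} together with theorem \ref{thm:duality}). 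The one delicate step is the middle one: I must control the difference of the two constraint norms \emph{relatively}, i.e.\ by a small multiple of $\|\d1 F(y_0,x_0)^* f\|$ uniformly in $f$, rather than merely by a small multiple of $\|f\|$. The positive lower bound $c$ furnished by hypothesis \ref{hyp:5} is precisely what makes this possible; the remaining passages---support-value homogeneity and the conversion of $(1\pm\eta)^{-1}$ into $(1\pm\epsilon)$---are routine.
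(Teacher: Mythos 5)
Your proposal is correct and follows essentially the same route as the paper: both arguments sandwich the $x$-dependent feasible set between scalings $(1\pm\epsilon)^{\pm1}$ of the fixed set ${\mathcal C}(x_0)$ and then transfer the scaling to the optimal values via homogeneity of the linear objective. The only cosmetic difference is in how the relative control of the two constraint norms is obtained --- the paper normalizes to the boundary of ${\mathcal C}(x_0)$ and uses compactness of that unit sphere, while you use the equivalent fact that the one-to-one adjoint $\d1 F(y_0,x_0)^*$ is bounded below by a constant $c>0$ --- but these are the same finite-dimensional fact in two guises.
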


\begin {proof} 
The hypotheses suffice to invoke theorem \ref {thm:duality} which says $\probmax1$ and $\probmax2$ are well defined on some neighborhood $N^{(1)}$ of $x_0$. The feasible sets are given by
\begin {displaymath}
{\mathcal C} (x) = \{ \mbox {$f \in (\Reals^m)^*$ : $\| \d1 F (y_0, x)^* f \| \le 1$}\} 
\end {displaymath}
\begin {displaymath}
\Mu1 (x)_{\max} = \Max {$f \in {\mathcal C} (x)$} f(F(y_0, x))
\qquad
\Mu2 (x)_{\max} = \Max {$f \in {\mathcal C} (x_0)$} f(F(y_0, x))
\end {displaymath}
The proof has three steps that culminate in equations (\ref {eqn:step1}), (\ref {eqn:step2}) and (\ref {eqn:step3}), respectively. 

(Step 1.) If $f_1 \in \bd {{\mathcal C} (x_0)} = \{ \mbox {$f$ : $\| \d1 F (y_0, x_0)^* f \| = 1$}\}$, then
\begin {eqnarray}
\nonumber
\big| \, \| \d1 F (y_0, x)^* f_1 \| - 1 \big|& =& \big| \strut \| \d1 F (y_0, x)^* f_1 \| - \| \d1 F (y_0, x_0)^* f_1 \| \big|\\
\noalign {\smallskip}
\nonumber
& \le& \| \d1 F (y_0, x)^* f_1 - \d1 F (y_0, x_0)^* f_1 \|\\
\noalign {\smallskip}
\label {eqn:uniform}
& \le& \| \d1 F (y_0, x)^* - \d1 F (y_0, x_0)^* \| \, \| f_1 \|\\
\noalign {\smallskip}
\nonumber
& =& \| \d1 F (y_0, x) - \d1 F (y_0, x_0) \| \, \| f_1 \|\\
\noalign {\smallskip}
\nonumber
& \le& \| \d1 F (y_0, x) - \d1 F (y_0, x_0) \| \, \Max {$f \in \bd
{{\mathcal C} (x_0)}$} \| f \| \, .
\end {eqnarray}
The linear transformation $\d1 F (y_0, x_0)$ is onto, so its adjoint $\d1 F (y_0, x_0)^*$ is one-to-one. Hence $\| \d1 F (y_0, x)^* f \|$ defines a norm on the dual space whose closed unit ball is ${\mathcal C} (x_0)$. Thus, in the last bound of equation (\ref {eqn:uniform}), the maximum is finite because ${\mathcal C} (x_0)$ is compact. There also, the difference term converges to $0$ as $x \rightarrow x_0$ because $F$ is continuously differentiable. Altogether, $\| \d1 F (y_0, x)^* f_1 \|$ converges to $1$ uniformly on $\bd {{\mathcal C} (x_0)}$ as $x \rightarrow x_0$. This means, for every $\epsilon > 0$, there is a neighborhood $N^{(2)} (\epsilon)$ of $x_0$, such that
\begin {equation} \label {eqn:step1}
\mbox {$x \in N^{(2)} (\epsilon)$ and $f_1 \in \bd {{\mathcal C}
(x_0)}$} \; \Rightarrow \; 1 - \epsilon \le \| \d1 F (y_0, x)^* f_1 \|
\le 1 + \epsilon
\end {equation}

(Step 2.) Choose $x \in N^{(1)} \cap N^{(2)} (\epsilon)$, and then choose any nonzero $f \in {\mathcal C} (x)$, and finally let $f_1 = f / \| \d1 F (y_0, x_0)^* f \| \in \bd {{\mathcal C} (x_0)}$. Assume without loss of generality that $\epsilon < 1$. It is now possible to calculate
\begin {eqnarray*}
\| (1 - \epsilon) \d1 F (y_0, x_0)^* f \|
& =& (1 - \epsilon) \, \| \d1 F (y_0, x_0)^* f \|\\
& \le& \| \d1 F (y_0, x)^* f_1 \| \, \| \d1 F (y_0, x_0)^* f \|
\note {from (\ref {eqn:step1})}\\
& =& \| \d1 F (y_0, x)^* f \|\\
& \le& 1 
\note {because $f \in {\mathcal C} (x)$.} 
\end {eqnarray*}
This proves $(1 - \epsilon) f \in {\mathcal C} (x_0)$. Similarly, choose any nonzero $f_2 \in {\mathcal C} (x_0)$ and let $f_1 = f_0 / \| \d1 F (y_0, x_0)^* f_0 \| \in \bd {{\mathcal C} (x_0)}$. It now follows that
\begin {eqnarray*}
\| (1 + \epsilon)^{-1} \d1 F (y_0, x)^* f_0 \|
& =& (1 + \epsilon)^{-1} \, \| \d1 F (y_0, x)^* f_0 \|\\
& =& (1 + \epsilon)^{-1} \, \| \d1 F (y_0, x)^* f_1 \| \, \| \d1 F
(y_0, x_0)^* f_0 \|\\
& \le& \| \d1 F (y_0, x_0)^* f_0 \|
\note {from equation (\ref {eqn:step1})}\\
& \le& 1
\note {because $f_0 \in {\mathcal C} (x_0)$.}
\end {eqnarray*}
This proves $(1 + \epsilon)^{-1} f_0 \in {\mathcal C} (x)$. These two calculations establish the next implication.
\begin {equation} \label {eqn:step2}
x \in N^{(1)} \cap N^{(2)} (\epsilon) \; \Rightarrow \; (1 - \epsilon)
\, {\mathcal C} (x) \; \subseteq \; {\mathcal C} (s_0) \; \subseteq \;
(1 + \epsilon) \, {\mathcal C} (x)
\end {equation}

(Step 3.) Choose $x \in N^{(1)} \cap N^{(2)} (\epsilon)$, and then choose $f_1 \in {\mathcal C} (x)$ that attains $\Mu1 (x)_{\max}$. Equation (\ref {eqn:step2}) asserts $(1 - \epsilon) f_1 \in {\mathcal C} (x_0)$, so
\begin {displaymath}
\Mu2 (x)_{\max} \; = \Max {$f \in {\mathcal C} (x_0)$} f (F(y_0, x)) \; \ge \; (1 - \epsilon) f_1 (F(y_0, x)) \; = \; (1 - \epsilon) \, \Mu1 (x)_{\max} \, .
\end {displaymath}
Similarly, choose $f_2 \in {\mathcal C} (x_0)$ that attains $\Mu2 (x)_{\max}$. Now equation (\ref {eqn:step2}) asserts $(1 + \epsilon)^{-1} f_2 \in {\mathcal C} (x)$, so
\begin {displaymath}
\Mu1 (x)_{\max} \; = \Max {$f \in {\mathcal C} (x)$} f (F(y_0, x)) \;
\ge \; (1 + \epsilon)^{-1} f_2 (F(y_0, x)) \; = \; (1 + \epsilon)^{-1}
\Mu2 (x)_{\max} \, .
\end {displaymath}
Together these two inequalities provide the final implication,
\begin {equation} \label {eqn:step3}
x \in N^{(1)} \cap N^{(2)} (\epsilon) \; \Rightarrow \; (1 - \epsilon)
\, \Mu1 (x)_{\max} \le \Mu2 (x)_{\max} \le (1 + \epsilon) \Mu1
(x)_{\max} \, ,
\end {equation}
which is (\ref {eqn:rational}) in definition \ref {def:rational}.
\end {proof}

\section {Third Equivalence, $\probmax2 \equiv \probmax3$}
\label {sec:third}

The proof of the last equivalence involves a class of norms that has been used already in the proof of theorem \ref {thm:second}. If a linear mapping $T : \Reals^m \rightarrow \Reals^p$ is onto, then its adjoint $T^*$ is one-to-one, so $\| T^* f \|$ defines a norm on the dual space, $(\Reals^p)^*$. The dual of this norm, viewed as a norm on $\Reals^p$, is given by the following construction. All the maximization problems in Table \ref {tab:equivalences} are norms of this kind. 

\begin {lemma} 
\label {lem:norm}
If a linear transformation $T : \Reals^m \rightarrow \Reals^p$ is onto, then
\begin {displaymath}
\| v \|_T \, := \Max {$f : \| T^* f \| \le 1$} \, f(v) \, ,
\end {displaymath}
is a norm on $\Reals^p$. (The proof is clear.)
\end {lemma}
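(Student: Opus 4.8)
The plan is to recognize $\| \cdot \|_T$ as the dual norm of an auxiliary norm on the dual space $(\Reals^p)^*$, and then to check the norm axioms directly, the only delicate point being definiteness. First I would record the structural fact behind the construction: because $T$ is onto, its adjoint $T^* : (\Reals^p)^* \rightarrow (\Reals^m)^*$ is one-to-one, so $\nu(f) := \| T^* f \|$ vanishes only at $f = 0$ and therefore defines a genuine norm on $(\Reals^p)^*$. Consequently the feasible set $\mathcal{C} = \{ f : \| T^* f \| \le 1 \}$ is exactly the closed unit ball of $\nu$; in the finite-dimensional space $(\Reals^p)^*$ it is convex, symmetric (so that $\mathcal{C} = -\mathcal{C}$), compact, and contains a ball about the origin.

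Next I would establish that $\| v \|_T$ is well defined for every $v$. The objective $f \mapsto f(v)$ is a continuous function and $\mathcal{C}$ is compact, so the maximum is attained and finite; this is the same compactness observation already used in Step~1 of the proof of Theorem \ref{thm:second}, where $\mathcal{C}(x_0)$ plays the role of $\mathcal{C}$.

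Then I would verify the algebraic axioms, each of which is routine. Subadditivity of the maximum gives $\max_f f(u + v) \le \max_f f(u) + \max_f f(v)$, which is the triangle inequality. Absolute homogeneity follows by pulling scalars through the functional, using the symmetry $\mathcal{C} = -\mathcal{C}$ to reduce the case of a negative scalar to the positive case by the substitution $f \mapsto -f$ in the maximization. Nonnegativity is immediate because the zero functional lies in $\mathcal{C}$, so the maximum is at least $0$.

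The one step that invokes the hypothesis in an essential way, and the main obstacle, is definiteness: showing that $\| v \|_T = 0$ forces $v = 0$. Here I would use that $\mathcal{C}$ contains a ball about the origin, which holds precisely because $\nu$ is a true norm, and this is exactly where the surjectivity of $T$ (equivalently, the injectivity of $T^*$) enters. Given any $v \ne 0$, there is a functional $f_0$ with $f_0(v) > 0$; rescaling $f_0$ so that it lands in the origin-ball inside $\mathcal{C}$ produces a feasible $f$ with $f(v) > 0$, whence $\| v \|_T > 0$. Equivalently and more conceptually, $\| \cdot \|_T$ is the dual norm $\nu^*$ under the canonical identification of $\Reals^p$ with its double dual, and the dual of a norm on a finite-dimensional space is again a norm; the definiteness of $\nu^*$ is dual to the fact that $\mathcal{C}$ has full linear span in $(\Reals^p)^*$, which is guaranteed by the injectivity of $T^*$.
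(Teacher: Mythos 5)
Your proposal is correct and follows exactly the route the paper intends: the paper omits the proof but its preceding paragraph already identifies $\| T^* f \|$ as a norm on $(\Reals^p)^*$ (via injectivity of $T^*$ from surjectivity of $T$) and describes $\| \cdot \|_T$ as its dual norm, which is precisely your argument with the routine axioms and the definiteness step filled in.
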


\begin {theorem}
[$\probmax2 \equiv \probmax3$] 
\label {thm:third} 
Under hypotheses \hypotheses 5, there is a neighborhood of $x_0$ where both of optimization problems $\probmax2$ and $\probmax3$ of Table \ref {tab:equivalences} are well defined. Their values are differentially equivalent at $x_0$ in the sense of definition \ref {def:differential}.

Under hypotheses \hypotheses 6, the values of the problems $\probmax2$ and $\probmax3$ are asymptotically equal at $x_0$ in the sense of definition \ref {def:rational}.
\end {theorem}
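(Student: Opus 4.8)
The plan is to recognize both optimal values as norms of explicit vectors and then compare those vectors using the differentiability of $F$. Set $T = \d1 F (y_0, x_0)$, which is onto by hypothesis \ref{hyp:} (\ref{hyp:5}); by lemma \ref{lem:norm} the formula $\| v \|_T = \Max {$f : \| T^* f \| \le 1$} f(v)$ defines a norm on $\Reals^p$. Since the feasible sets of $\probmax2$ and $\probmax3$ are both $\{ f : \| T^* f \| \le 1 \}$, the two values are $\Mu2 (x)_{\max} = \| F (y_0, x) \|_T$ and $\Mu3 (x)_{\max} = \| \d2 F (y_0, x_0) (x - x_0) \|_T$, and well-definedness is automatic since lemma \ref{lem:norm} guarantees these maxima are attained. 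The single analytic input I would use is that continuous differentiability together with $F (y_0, x_0) = 0$ from hypotheses \hypotheses 4 makes the two vectors inside these norms differ by $o (\| x - x_0 \|)$, i.e.\ $\| F (y_0, x) - \d2 F (y_0, x_0) (x - x_0) \| = o (\| x - x_0 \|)$.

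For the differential equivalence I would combine the reverse triangle inequality for $\| \cdot \|_T$ with the equivalence of norms on the finite dimensional space $\Reals^p$, fixing a constant $C$ with $\| w \|_T \le C \| w \|$. This bounds $| \Mu2 (x)_{\max} - \Mu3 (x)_{\max} |$ above by $C \, \| F (y_0, x) - \d2 F (y_0, x_0) (x - x_0) \|$, which is $o (\| x - x_0 \|)$. Hence the quotient in equation (\ref{eqn:FrechetEquivalent}) tends to $0$ and $\Mu2 \differential{x_0} \Mu3$, giving the first assertion under hypotheses \hypotheses 5.

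For the asymptotic equality I would add hypothesis \ref{hyp:} (\ref{hyp:6}), that $\d2 F (y_0, x_0)$ is one-to-one. Then $v \mapsto \| \d2 F (y_0, x_0) v \|_T$ is itself a norm on $\Reals^n$, so by equivalence of norms there is a constant $c > 0$ with $\| \d2 F (y_0, x_0) v \|_T \ge c \, \| v \|$ for all $v$; in particular $\Mu3 (x)_{\max} \ge c \, \| x - x_0 \|$, which is strictly positive for $x \ne x_0$. Dividing the earlier absolute bound by this lower bound turns it into a relative error of at most $C \eta / c$, where $\eta$ is the Fr\'echet remainder ratio $\| F (y_0, x) - \d2 F (y_0, x_0) (x - x_0) \| / \| x - x_0 \|$, which can be made as small as desired near $x_0$. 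Given $\epsilon > 0$, choosing the neighborhood so that $\eta \le c \epsilon / C$ yields $| \Mu2 (x)_{\max} - \Mu3 (x)_{\max} | \le \epsilon \, \Mu3 (x)_{\max}$, which is precisely (\ref{eqn:rational}) in definition \ref{def:rational}.

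The step I expect to be the main obstacle --- and the reason the two conclusions need different hypotheses --- is exactly the passage from an absolute to a relative error. Under hypotheses \hypotheses 5 alone, $\Mu3 (x)_{\max}$ could decay faster than $\| x - x_0 \|$ along directions where $\d2 F (y_0, x_0)$ is nearly singular, so the quotient $| \Mu2 (x)_{\max} - \Mu3 (x)_{\max} | / \Mu3 (x)_{\max}$ need not stay bounded and only differential equivalence survives. The injectivity furnished by hypothesis \ref{hyp:} (\ref{hyp:6}) is precisely what produces the linear lower bound $\Mu3 (x)_{\max} \ge c \, \| x - x_0 \|$ that controls the quotient and upgrades the conclusion to asymptotic equality.
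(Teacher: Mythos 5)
Your proposal is correct and follows essentially the same route as the paper: identify $\Mu2 (x)_{\max} = \| F (y_0, x) \|_T$ and $\Mu3 (x)_{\max} = \| \d2 F (y_0, x_0) (x - x_0) \|_T$ via lemma \ref{lem:norm}, use the (reverse) triangle inequality plus equivalence of finite-dimensional norms to get the $o(\| x - x_0 \|)$ absolute bound for differential equivalence, and then use the injectivity of $\d2 F (y_0, x_0)$ to lower-bound $\Mu3 (x)_{\max}$ by a multiple of $\| x - x_0 \|$ and convert the absolute error to a relative one for asymptotic equality. Your closing remark about why hypothesis \ref{hyp:} (\ref{hyp:6}) is exactly what upgrades the conclusion matches the paper's Part 2.
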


\begin {proof}
(Part 1.) Let $\| \cdot \|_T$ be the norm given in lemma \ref {lem:norm} for the linear transformation $T = \d1 F (y_0, x_0)$. Let $\TanX (y, x) = \d2 F (y, x_0) (x - x_0) + F (y, x_0)$ be the linear function parameterized by $y$ whose graph is tangent to the graph of $F (y, x)$ at $x = x_0$. (Note this is not the $\TanY$ of Table \ref {tab:simplifications}.) In this notation, $\Mu2 (x)_{\max} = \| F (y_0, x) \|_T$ and $\Mu3 (x)_{\max} = \| \TanX (y_0, x) \|_T$. Thus by the triangle inequality,
\begin {displaymath}
\big| \, \Mu2 (x)_{\max} - \Mu3 (x)_{\max} \big|
= \big| \, \strut \| F (y_0, x) \|_T - \| \TanX (y_0, x) \|_T \big|
\le \| F (y_0, x) - \TanX (y_0, x) \|_T \, .
\end {displaymath}
The difference between $F (y_0, x)$ and $\TanX (y_0, x)$ is $o ( \| x - x_0 \|)$ uniformly in $x$ by the definition of Fr\'echet differentiability. The same estimate applies in the $\| \cdot \|_T$ norm because all norms are equivalent in finite dimensional spaces. Therefore
\begin {displaymath}
\Lim {$x \rightarrow x_0$} {| \strut \Mu2 (x)_{\max} - \Mu3 (x)_{\max} | \over \| x - x_0 \|} \; \le \; \Lim {$x \rightarrow x_0$} {\| F(y_0, x) - \TanX (y_0, x) \|_T \over \| x - x_0 \|} \; = \; 0 \, ,
\end {displaymath}
which is (\ref {eqn:FrechetEquivalent}) in definition \ref {def:differential}.

(Part 2.) The Fr\'echet differentiability of $F$ with $F(y_0, x_)0) = 0$ imply
\begin {displaymath}
F (y_0, x) = \d2 F (y_0, x_0) (\Delta x) + R (\Delta x)
\end {displaymath}
where $\Delta x = x - x_0$ and the remainder $R(\Delta x)$ is $o(\| \Delta x \|)$. Again by the triangle inequality,
\begin {displaymath}
\| F (y_0, x) \|_T \, = \, \| \d2 F (y_0, x_0) (\Delta x) + R(\Delta x) \|_T \hspace {0.5em} \raisebox {-1ex} {$\displaystyle \mathop {<}^{\displaystyle >}$} \hspace {0.5em} \| \d2 F (y_0, x_0) (\Delta x) \|_T \; \raisebox {-1ex} {$\displaystyle \mathop {+}^{\displaystyle -}$} \; \| R(\Delta x) \|_T \, .
\end {displaymath}
where as noted $\Mu2 (x)_{\max} = \| F (y_0, x) \|_T$ and $\Mu3 (x)_{\max} = \| \d2 F (y_0, x_0) (\Delta x) \|_T$. The latter is a norm for $\Delta x$ under the present hypothesis that $\d2 F(y_0, x_0)$ is one-to-one. Thus, if $x \ne x_0$, then the inequalities can be divided by $\Mu3 (x)$ to give,
\begin {displaymath}
\left| {\Mu2 (x)_{\max} \over \Mu3 (x)_{\max}} - 1 \, \right| \le {\| R(\Delta x) \|_T \over \| \d2 F (y_0, x_0) (\Delta x) \|_T} \, .
\end {displaymath}
Again by the equivalence of all norms for a finite dimensional space, the upper bound vanishes in the limit $x \rightarrow x_0$ because $R(\Delta x)$ is $o(\| \Delta x \|)$. The vanishing limit implies (\ref {eqn:rational}) in definition \ref {def:rational}.
\end {proof}

\section {Summary in Matrix Notation and for 2-Norms}

Suppose bases have been chosen for $\Reals^m$, $\Reals^n$, $\Reals^p$ and a norm has been chosen to measure perturbations in $\Reals^m$. These choices express the optimization problems in matrix notation: 
\smallskip
\begin {enumerate}
\item $\J(x)$ is the $p \times n$ Jacobian matrix for $\d2 F (y_0, x)$. The entries are the partial derivatives of $F(y,x)$ with respect to $x$ evaluated at $(y_0, x)$.
\item $\K (x)$ is the $p \times m$ Jacobian matrix for $\d1 F (y_0, x)$. Entries are partial derivatives of $F(y,x)$ with respect to $y$ evaluated at $(y_0, x)$.
\item The residual vector of the equations is $\r (x) = F(y_0, x) \in \Reals^p$.
\item $\| \cdot \|$ is the chosen norm for $\Reals^m$, and $\| \cdot \|^*$ is the dual norm.
\end {enumerate}
\smallskip
The matrix versions of the problems are in \textbf {Table \ref {tab:matrix}}. If $\K (x_0)$ has full row rank, then this paper has shown: 
\smallskip
\begin {enumerate}
\item The minimizations and maximizations of Table \ref {tab:matrix} are duals (theorem \ref {thm:duality}).
\item The optimal values $\MuF {(x)}$, $\Mu 1 (x)$, $\Mu 2 (x)$ are asymptotically equal at $x_0$ (theorems \ref {thm:first} and \ref {thm:second}).
\item $\Mu 3 (x)$ is differentially equivalent to the other values (theorem \ref {thm:third}).
\end {enumerate} 
\smallskip
That is, the values $\Mu i (x)$ approximate $\MuF (x)$ increasingly well as $x$ nears $x_0$. For $2$-norms, the approximations can be found very simply using the matrix QR factorization.

\begin {table} [h]
\caption {Optimization problems of Table \ref {tab:equivalences} in matrix notation. In these formulas, $\Delta x = x - x_0$.}
\label {tab:matrix}
\small
\renewcommand {\substrut} {{\normalsize \vrule depth0pt height2ex
width0pt}}
\renewcommand {\Max} [1] {\max_{\makebox[4em][l]{\substrut \scriptsize #1}}}
\renewcommand {\Min} [1] {\min_{\makebox[6em][l]{\substrut \scriptsize #1}}}
\newcommand {\minobject} {\| \Delta y \|}
\newcommand {\tablestrut} {\vrule depth4.25ex height3.0ex width0pt}
\arraycolsep = 0.75em
\begin {displaymath}
\begin {array} {| c | c | l | l |}
\mbox {name}& \mbox {value}& \multicolumn{1}{c|}{\mbox {minimization form}}& \multicolumn{1}{c |}{\mbox {dual maximization}}\\
\hline \hline
\tablestrut \prob{}&
\MuF (x)&
\displaystyle 
\Min {$F (y, x) = 0$} \| y - y_0 \|&
\\ \hline
\tablestrut \prob1&
\Mu1 (x)&
\displaystyle 
\Min {$\K (x) \, \Delta y = {} - \r (x)$} \minobject&
\displaystyle 
\Max {$\| \, \K (x)^T u \, \|^* \le 1$} u^T \r (x)
\\ \hline
\tablestrut \prob2&
\Mu2 (x)&
\displaystyle 
\Min {$\K (x_0) \, \Delta y = {} - \r (x)$} \minobject&
\displaystyle 
\Max {$\| \, \K (x_0)^T u \, \|^* \le 1$} u^T \r (x)
\\ \hline
\tablestrut \prob3&
\Mu3 (x)&
\displaystyle
\Min {$\K(x_0) \, \Delta y = {} - \J (x_0) \, \Delta x$} 
\minobject&
\displaystyle \Max {$\| \, \K (x_0)^T u \, \|^* \le 1$} u^T \J (x_0) \, \Delta x
\\ \hline
\end {array}
\end {displaymath}
\end {table}

\begin {lemma}
Let $A \in \Reals^{m \times p}$ and $s, u \in \Reals^p$. If $A$ has full column rank, then for the $A = QR$ factorization, 
\begin {displaymath}
\Max {$\| \kern 0.5pt A u \kern 1pt \|_2 \le 1$} u^T s = \| \kern 1pt R^{-T} s \kern 1pt \|_2 \, .
\end {displaymath}
\end {lemma}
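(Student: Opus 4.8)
The plan is to strip the factor $Q$ out of the constraint, reduce the problem by a linear change of variables to the maximization of a linear functional over the Euclidean unit ball, and then invoke the self-duality of the $2$-norm. The only facts needed are the defining property of the QR factorization and the Cauchy--Schwarz inequality.

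First I would use the QR factorization itself. Since $A \in \Reals^{m \times p}$ has full column rank, in $A = QR$ the factor $Q \in \Reals^{m \times p}$ has orthonormal columns, so $Q^T Q = I_p$, and $R \in \Reals^{p \times p}$ is upper triangular with nonzero diagonal, hence invertible. Because $Q^T Q = I_p$, the map $Q$ preserves Euclidean length on $\Reals^p$: for every $v \in \Reals^p$ one has $\| Q v \|_2^2 = v^T Q^T Q v = \| v \|_2^2$. Applying this with $v = R u$ gives $\| A u \|_2 = \| Q R u \|_2 = \| R u \|_2$, so the feasible set $\{ u : \| A u \|_2 \le 1 \}$ equals $\{ u : \| R u \|_2 \le 1 \}$.

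Next I would change variables by setting $w = R u$. As $R$ is invertible this is a bijection of $\Reals^p$ with inverse $u = R^{-1} w$, and it carries the feasible set exactly onto the Euclidean unit ball $\{ w : \| w \|_2 \le 1 \}$. The objective transforms to $u^T s = (R^{-1} w)^T s = w^T (R^{-T} s)$. The problem thus becomes the maximization of the linear functional $w \mapsto w^T (R^{-T} s)$ over $\| w \|_2 \le 1$. By the Cauchy--Schwarz inequality this maximum equals $\| R^{-T} s \|_2$, attained at $w = R^{-T} s / \| R^{-T} s \|_2$ when $R^{-T} s \neq 0$ and trivially when $R^{-T} s = 0$. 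Substituting back gives the claimed value.

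I expect no serious obstacle here; the proof is elementary. The only points that warrant a word of care are that full column rank is precisely what guarantees $R$ is invertible (so the substitution $w = R u$ is a genuine bijection), and that it is the identity $Q^T Q = I_p$, valid for the thin QR factor, rather than $Q Q^T = I_m$, that is needed to preserve the length of vectors in $\Reals^p$.
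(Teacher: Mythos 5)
Your proof is correct and is essentially the paper's own argument: both rest on the identity $u^T s = (Ru)^T(R^{-T}s)$ together with $\|Au\|_2 = \|Ru\|_2$ (from $Q^TQ = I_p$) and Cauchy--Schwarz, with the same maximizer $u = R^{-1}R^{-T}s/\|R^{-T}s\|_2$. Your version merely makes the change of variables $w = Ru$ explicit and handles the degenerate case $R^{-T}s = 0$, which the paper's one-line proof glosses over.
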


\begin {proof}
Because $u^T s = u^T R^T R^{-T} s$ and $\| R u \|_2 = \| A u \|_2 = 1$ therefore $u^T s \le \| R^{-T} s \|_2$ with equality when $u = R^{-1} R^{-T} s / \| R^{-T} s \|_2$.
\end {proof}

\appendix

\addcontentsline {toc} {section} {Appendix. Nomenclature and Notation}
\section {Nomenclature and Notation} This appendix lists some standard notation that is used without comment throughout the paper.

\newcommand {\face} [1] {\textbf {#1}}

\smallskip
\begin {enumerate}
\item For $f : {\mathcal D} \subseteq \Reals^m \times \Reals^n \rightarrow \Reals^p$, the \face {Fr\'echet derivative} of $f$ evaluated at $(y, x)$ is $D f (y, x) \in \hom (\Reals^{m+n}, \Reals^p)$. The \face {partial Fr\'echet derivative} of $f$ with respect to the first space $\Reals^m$ and evaluated at $(y, x)$ is $\d1 f (y, x) \in \hom (\Reals^m, \Reals^p)$, and similarly for the second space $\Reals^n$ and $\d2 f$. 

\smallskip
\item The \face {dual space} of a normed linear space $\Reals^m$ is the space of functionals $(\Reals^m)^* = \hom (\Reals^m, \Reals)$ with the induced norm. The annihilator of a set $\Set \subseteq \Reals^m$ is the subspace $\Set^\perp \subseteq (\Reals^m)^*$. The subspace annihilated by a set $\Set \subseteq (\Reals^m)^*$ is ${}^\perp\Set \subseteq \Reals^m$. The \face {transpose} of $T \in \hom (\Reals^m, \Reals^p)$ is $T^* \in \hom ((\Reals^p)^*, (\Reals^m)^*)$.

\smallskip
\item The \face {interior}, \face {boundary}, and \face {closure} of a set $\Set$ are indicated by $\Int{\Set}$, $\bd{\Set}$, and $\cl{\Set}$.

\smallskip
\item The \face {open ball} with center $c$ and radius $r$ is $\ball c r$.

\smallskip
\item Six lemmas assert the existence of \face {neighborhoods} that are indicated by placing the lemma number in a superscript, the point around which the neighborhood lies in a subscript, and any parameterization of the neighborhood in parentheses: 
\begin {displaymath}
\Lipschitz 
\quad 
\LipschitzForOther i
\quad 
\MVTs \epsilon 
\quad 
\MVTd 
\epsilon 
\quad
\ml 
\quad
\colocated \, .
\end {displaymath}
\end {enumerate}


\addcontentsline {toc} {section} {References}

\end{document}